\documentclass[pdflatex,sn-mathphys-num]{sn-jnl}

\usepackage[pagewise]{lineno}

\usepackage[utf8]{inputenc}
\usepackage[T1]{fontenc}

\usepackage{graphicx}
\usepackage{amsmath,amssymb,amsfonts}
\usepackage{amsthm}
\usepackage{mathrsfs}
\usepackage{xcolor}
\usepackage{textcomp}
\usepackage{booktabs}
\usepackage{float}
\usepackage{caption}
\usepackage{url}

\usepackage[inoutnumbered,ruled]{algorithm2e}

\theoremstyle{thmstyleone}%
\newtheorem{theorem}{Theorem}
\newtheorem{proposition}[theorem]{Proposition}%
\newtheorem{lemma}[theorem]{Lemma}
\newtheorem{corollary}[theorem]{Corollary}

\theoremstyle{thmstyletwo}%
\newtheorem{assumption}[theorem]{Assumption}
\newtheorem{remark}{Remark}%

\theoremstyle{thmstylethree}%
\newtheorem{definition}{Definition}%

\DeclareMathOperator{\Real}{Re}

\usepackage[figcolor=white]{todonotes}

\begin{document}

\title[Transparent BCs for the Time-Fractional Heat Equation on Metric Graphs]
{Transparent Boundary Conditions for the Time-Fractional \\ Heat Equation on Metric Graphs}

\author*[1,2]{\fnm{Anis} \sur{Bel Hadj Hassin}}\email{anis.hadjhassin@gmail.com}

\author[3]{\fnm{Matthias} \sur{Ehrhardt}}\email{ehrhardt@uni-wuppertal.de}

\affil*[1]{\orgdiv{LAMSIN}, \orgname{University of Tunis El Manar, National Engineering School of Tunis}, \orgaddress{\postcode{1002}, \city{Tunis}, \country{Tunisia}}}

\affil[2]{\orgname{ESPRIT School of Engineering}, \orgaddress{\street{1, 2 rue Andr\'e Amp\`ere}, \postcode{2083}, \city{Pôle Technologique}, \country{Tunisia}}}

\affil[3]{\orgdiv{Applied and Computational Mathematics}, \orgname{University of Wuppertal}, \orgaddress{\street{Gau\ss{}strasse 20}, \postcode{42119}, \city{Wuppertal}, \country{Germany}}}

\abstract{In this work, we study transparent boundary conditions for the time-fractional heat equation on a metric star graph. 
This equation is the standard continuum model for subdiffusive heat and mass transport in disordered and porous branched media. 
Using the Laplace transform to solve the exterior branch problems, we demonstrate that the non-local vertex operator observed by the incoming bond is an Abel-type convolution kernel of order $\beta/2$, which generalizes the classical half-derivative kernel recovered at $\beta=1$.  
Our main result is that the diffusion-coefficient sum rule $\kappa_1=\kappa_2+\kappa_3$, known to eliminate thermal backflow for the classical heat equation, remains unchanged for every fractional order $\beta$: the transparency of the junction depends only on the network's diffusivities and not on the memory of the transport law. 
We rigorously prove this and uniqueness for the coupled vertex problem. 
We confirm our results numerically at the level of the underlying coupled partial differential equation system with an implicit L1 finite-difference scheme. 
We compare the star-graph solution to a reference solution on an unbranched line with matching diffusivity.}

\keywords{transparent boundary conditions, time-fractional diffusion, metric graphs, sub-diffusion.}

\pacs[MSC Classification]{35R11, 34B45, 65M06, 45D05}

\maketitle

\section{Introduction}\label{sec:intro}
The design of tunable heat transport in nanostructured and disordered materials porous media, amorphous semiconductors, granular packings, heterogeneous polymer networks routinely runs into a common difficulty: 
local disorder, trapping sites, and a broad distribution of relaxation times cause the mean-square spread of thermal energy to grow sub-linearly in time, rather than linearly as predicted by the classical Fourier law. 
This anomalous, sub-diffusive behaviour is captured at the continuum level by replacing the ordinary first-order time derivative in the heat equation with a Caputo fractional derivative of order $\beta\in(0,1)$; 
the resulting time-fractional heat equation is now a standard modeling tool for thermal and mass transport in such media, with documented relevance to thermal management in nanostructured electronic and battery-electrode materials, heat and solute transport in porous rock and soil, and diffusive transport in biological tissue, where trapping and structural heterogeneity produce the same sub-linear spreading regardless of the physical carrier (heat, charge, or mass).

Many of the systems in which this anomalous transport is observed, moreover, are not simple one-dimensional conductors but branched networks porous rock, vascular and root systems, nanowire and nanotube junctions, branched polymers naturally modelled as metric (quantum) graphs: 
collections of one-dimensional bonds joined at vertices, with a differential operator on each bond coupled through vertex conditions~\cite{Kuchment2004, KostrykinSchrader1999}. 
Controlling how thermal (or, more generally, diffusive) energy splits at such a junction, without spurious reflection, is not merely a numerical convenience for truncating an otherwise unbounded simulation domain: it is the same design question that governs thermal management at a branched nanowire junction, solute exchange at a vascular or root bifurcation, or signal integrity at a branching interconnect, wherever the underlying transport is sub-diffusive rather than classical. 
In a recent work, Matrasulov, Yusupov and Ehrhardt~\cite{Matrasulov2026} showed how to construct a \textit{transparent boundary condition} (TBC) at such a vertex for the \textit{classical} heat equation. 
Combining the continuity of temperature and a Kirchhoff-type flux balance at a three-bond star graph with the Laplace-transform argument classically used to build TBCs on the real line~\cite{Ehrhardt2001}, they obtained a non-local-in-time boundary operator on the incoming bond whose convolution kernel is the classical Abel kernel $(t-\tau)^{-1/2}$. 
This vertex operator was shown to coincide exactly with the free-space (unbranched) transparent condition precisely when the diffusion coefficients satisfy the \textit{sum rule}
\begin{equation}\label{eq:sumrule}
  \kappa_1 = \kappa_2 + \kappa_3, 
\end{equation}
eliminating thermal backflow at the junction entirely.

The kernel $(t-\tau)^{-1/2}$ that emerges from this construction is, once recognized, nothing but a half-order Riemann-Liouville derivative in disguise precisely the operator that a Caputo time-derivative of order $1/2$ would produce.
Together with the physical relevance of sub-diffusive transport on branched networks recalled above, this observation raises the question we address here: what does a transparent junction look like once the transport law itself is fractional? 
We show below that solving the same exterior-branch problem for the time-fractional heat equation, with Caputo order $\beta\in(0,1)$ in place of the ordinary time derivative, replaces the half-order kernel by one of order $\beta/2$. 
What is not evident \textit{a priori} is whether the sum rule~\eqref{eq:sumrule}, derived for $\beta=1$, survives this generalization unchanged, or is instead replaced by some $\beta$-dependent condition. We show that it survives unchanged: relation~\eqref{eq:sumrule} eliminates backflow for every $\beta\in(0,1)$, exactly as it does at $\beta=1$. 

This paper is organized as follows: Section~\ref{sec:prelim} collects the fractional-calculus facts used throughout the paper; Sections~\ref{sec:line}--\ref{sec:graph} derive the invariance result; Section~\ref{sec:numerics} confirms it numerically; and Section~\ref{sec:discussion} concludes this work and discusses the future 
work on  a genuinely discrete fractional vertex condition.

\section{Preliminaries}\label{sec:prelim}
We collect here the definitions and elementary facts about the Caputo fractional derivative and its Laplace transform that are used, repeatedly and without further comment, in the next Sections~\ref{sec:line}--\ref{sec:graph}.

\begin{definition}[Riemann-Liouville integral and Caputo derivative]\label{def:caputo}
For $\gamma>0$ and $g\in L^1_{\mathrm{loc}}[0,\infty)$, the Riemann-Liouville fractional integral of order $\gamma$ is
\begin{equation}
  I_t^\gamma g(t) = \frac{1}{\Gamma(\gamma)}\int_0^t (t-\tau)^{\gamma-1} g(\tau)\,d\tau, \qquad t>0.
\end{equation}
For $0<\alpha<1$ and $f$ absolutely continuous, the Caputo fractional derivative of order $\alpha$ is
\begin{equation}\label{eq:integral}
    D_t^\alpha f(t) = I_t^{1-\alpha} f'(t) 
    = \frac{1}{\Gamma(1-\alpha)} \int_0^t (t-\tau)^{-\alpha} f'(\tau)\,d\tau.
\end{equation}
We extend this definition to the endpoint $\alpha=1$ by the usual convention $D_t^1 f := f'$, recovering the ordinary derivative; this value is not covered by the integral formula \eqref{eq:integral} (which requires $1-\alpha>0$) but is the natural limiting case and is the one used whenever $\beta=1$ below.
\end{definition}

\begin{lemma}[Laplace transform of the Caputo derivative, {\cite[Section~7.2]{Diethelm2010}}]\label{lem:LTcaputo}
Let $0<\alpha\le1$ and let $f$ be of exponential order with $f'\in L^1_{\mathrm{loc}}[0,\infty)$. 
Then, for $\Real(s)>0$ and with $s^\alpha$ denoting the principal branch,
\begin{equation}\label{eq:LTcaputo0}
    \mathcal{L}\{D_t^\alpha f\}(s) = s^\alpha \widehat{f}(s) - s^{\alpha-1}f(0). 
\end{equation}
In particular, if $f(0)=0$ then $\mathcal{L}\{D_t^\alpha f\}(s)=s^\alpha \widehat{f}(s)$.
\end{lemma}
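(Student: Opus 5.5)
The plan is to reduce the Caputo derivative to a Laplace transform of a convolution and then apply the convolution theorem. For $0<\alpha<1$, Definition~\ref{def:caputo} writes $D_t^\alpha f = I_t^{1-\alpha} f'$. This is the convolution $k * f'$ with the kernel $k(t)=t^{-\alpha}/\Gamma(1-\alpha)$, which lies in $L^1_{\mathrm{loc}}[0,\infty)$ because $-\alpha>-1$.

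\textbf{Laplace transform of the kernel.} First I would compute $\widehat{k}(s)$. For real $s>0$, the substitution $u=st$ in Euler's integral gives $\int_0^\infty t^{-\alpha}e^{-st}\,dt=\Gamma(1-\alpha)s^{\alpha-1}$, so $\widehat{k}(s)=s^{\alpha-1}$. Both sides are analytic in $\Real(s)>0$: the left side because the integral converges absolutely and locally uniformly there, the right side with the principal branch. By the identity theorem the formula extends to all of $\Real(s)>0$.

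\textbf{Transform of the derivative.} Next I would compute $\mathcal{L}\{f'\}(s)=s\widehat f(s)-f(0)$. Since $f'\in L^1_{\mathrm{loc}}$, the function $f$ is absolutely continuous, so the usual integration by parts on $[0,T]$ is justified. The boundary term $e^{-sT}f(T)$ tends to $0$ as $T\to\infty$ once $\Real(s)$ exceeds the exponential order of $f$.

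\textbf{Convolution step.} Then I would apply the convolution theorem: $\mathcal{L}\{k*f'\}=\widehat k\,\mathcal{L}\{f'\}$, which is valid when both transforms converge absolutely. This gives $s^{\alpha-1}(s\widehat f-f(0))=s^\alpha\widehat f-s^{\alpha-1}f(0)$. The case $\alpha=1$ is exactly the derivative formula, by the convention $D_t^1f=f'$. The special case $f(0)=0$ is then immediate.

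\textbf{Main obstacle.} The delicate point is the convergence hypotheses behind the convolution theorem and the claim for every $\Real(s)>0$ rather than only for large $\Real(s)$. Exponential order of $f$ alone does not make $f'$ of exponential order. I would handle this by establishing the identity for $\Real(s)$ large, using Fubini/Tonelli for the absolutely convergent double integral. Where the statement is asserted on the whole half-plane, I would interpret it, as in \cite{Diethelm2010}, under the tacit assumption that $f$ and $f'$ have abscissa of convergence at most $0$, and extend by analytic continuation. If needed I would also note that $f'$ being of exponential order suffices, and that this is the setting in which the lemma is applied later.
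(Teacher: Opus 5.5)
Your proposal is correct and follows the route the paper sketches: the kernel transform $\mathcal{L}\{t^{-\alpha}/\Gamma(1-\alpha)\}(s)=s^{\alpha-1}$, the derivative rule $\mathcal{L}\{f'\}(s)=s\widehat f(s)-f(0)$, and the convolution theorem, with $\alpha=1$ covered by the convention $D_t^1f=f'$. Your convergence discussion is more careful than the paper's, and you do not need to strengthen the hypotheses to $f'$ of exponential order: $\widehat{k}$ converges absolutely for $\Real(s)>0$, and your integration by parts shows that $\mathcal{L}\{f'\}$ converges (possibly only conditionally) for $\Real(s)$ above the growth exponent of $f$, so the Mertens-type convolution theorem (one transform absolutely convergent, the other merely convergent) already gives the identity there, whereas the Fubini/Tonelli route you propose genuinely requires $\int_0^\infty \mathrm{e}^{-\sigma t}|f'(t)|\,dt<\infty$.
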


For $0<\alpha<1$, this is a standard identity of fractional calculus. Here,
we do not reproduce its (short) proof, which combines the convolution theorem, $\mathcal{L}\{t^{\gamma-1}/\Gamma(\gamma)\}(s) = s^{-\gamma}$ for $\Real(s)>0$, $\gamma>0$, with the 
derivative rule $\mathcal{L}\{f'\}(s) = s\widehat{f}(s)-f(0)$. 
At the endpoint $\alpha=1$, the identity is immediate from the same ordinary-derivative rule together with the convention $D_t^1f:=f'$ of Definition~\ref{def:caputo}, and is the case used whenever $\beta=1$ below.

\begin{lemma}[Positivity of $\Real(s^\gamma)$ in the right half-plane]\label{lem:branch}
Let $0<\gamma\le1$. If $\Real(s)>0$, then $\Real(s^\gamma)>0$, for the principal branch of $s^\gamma$.
\end{lemma}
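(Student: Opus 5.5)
The plan is to write $s$ in polar form and read off the real part of $s^\gamma$ directly from the argument. Since $\Real(s)>0$, the point $s$ lies off the branch cut $(-\infty,0]$ of the principal logarithm, so we may write uniquely $s=re^{i\theta}$ with $r=|s|>0$ and $\theta=\operatorname{Arg}(s)\in(-\pi,\pi]$.

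The condition $\Real(s)=r\cos\theta>0$ forces $\cos\theta>0$, which on $(-\pi,\pi]$ means $\theta\in(-\pi/2,\pi/2)$. By the definition of the principal branch,
\begin{equation*}
  s^\gamma=\exp(\gamma\,\mathrm{Log}\, s)=r^\gamma e^{i\gamma\theta},
  \qquad
  \Real(s^\gamma)=r^\gamma\cos(\gamma\theta).
\end{equation*}

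Because $0<\gamma\le 1$, we have $|\gamma\theta|\le|\theta|<\pi/2$. Hence $\cos(\gamma\theta)>0$, and since $r^\gamma>0$ the product $r^\gamma\cos(\gamma\theta)$ is strictly positive, which is the claim.

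There is no real obstacle here. The only points needing care are to confirm that the principal branch agrees with the polar formula on the open right half-plane, since no branch-cut crossing occurs there, and to note that the endpoint $\gamma=1$ is trivial. It is also worth recording, for later use with $\gamma=\beta/2$, the stronger conclusion $|\operatorname{Arg}(s^\gamma)|<\gamma\pi/2$.
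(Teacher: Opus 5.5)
Your proof is correct and follows the paper's argument essentially verbatim: you write $s=re^{i\theta}$ with $\theta\in(-\pi/2,\pi/2)$, take $s^\gamma=r^\gamma e^{i\gamma\theta}$ for the principal branch, and conclude from $|\gamma\theta|<\pi/2$ that $\Real(s^\gamma)=r^\gamma\cos(\gamma\theta)>0$. Deriving $\theta\in(-\pi/2,\pi/2)$ from the principal range $(-\pi,\pi]$, and recording the sharper bound $|\gamma\theta|<\gamma\pi/2$ on the argument of $s^\gamma$, are harmless refinements of the same proof.
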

\begin{proof}
Since $\Real(s)>0$, the point $s$ lies strictly in the right half of the complex plane, so we may write it in polar form as 
$s=r\,\mathrm{e}^{\mathrm{i}\theta}$ with modulus $r>0$ and argument 
$\theta\in(-\pi/2,\pi/2)$. 
Recall that the argument is restricted to this open interval precisely because $s$ has positive real part. 
We use the principal branch of the power function, defined on this same right half-plane by
$s^\gamma := r^\gamma \,\mathrm{e}^{\mathrm{i}\gamma\theta}$, 
i.e.\ it raises the modulus to the power $\gamma$ and multiplies the argument by $\gamma$.

Since $0<\gamma\le1$ and $\theta\in(-\pi/2,\pi/2)$, multiplying through by $\gamma$ gives $\gamma\theta\in(-\gamma\pi/2,\gamma\pi/2)\subseteq(-\pi/2,\pi/2)$: 
the rescaled angle $\gamma\theta$ still lies strictly between $-\pi/2$ and $\pi/2$,
and in particular it can never reach $\pm\pi/2$. 
This is exactly the range on which the cosine function is strictly positive, so $\cos(\gamma\theta)>0$.

Finally, taking the real part of $s^\gamma=r^\gamma\, \mathrm{e}^{\mathrm{i}\gamma\theta}=r^\gamma\big(\cos(\gamma\theta)+ \mathrm{i}\sin(\gamma\theta)\big)$ gives $\Real(s^\gamma) = r^\gamma\,\cos(\gamma\theta)$. 
Since $r^\gamma>0$ (as $r>0$) and $\cos(\gamma\theta)>0$ by the previous step, their product is positive, i.e.\ $\Real(s^\gamma)>0$, as claimed. 
We note that the assumption $\gamma\le1$ is essential here: for $\gamma>1$ the angle $\gamma\theta$ could exceed $\pi/2$ in absolute value even when $\theta$ is close to $\pm\pi/2$, making $\cos(\gamma\theta)$ negative and the conclusion false.
\end{proof}

The following application of Lemma~\ref{lem:branch} uses the values of
$\gamma$ and $\beta$  as follows: $\gamma=\beta$ and $\gamma=\beta/2$, both in $(0,1]$  for $\beta\in(0,1]$. 
This guarantees that the exponentials appearing in the Laplace-transformed ordinary differential equations (ODEs) of Sections ~\ref{sec:line}–\ref{sec:graph} decay or grow unambiguously as $x\to\pm\infty$, 
allowing the bounded-solution selection argument used at $\beta=1$ 
in~\cite{Matrasulov2026} to be applied verbatim to  $\beta\in(0,1]$.

We also record, without proof, two standard facts used to justify the manipulations of Sections~\ref{sec:line}--\ref{sec:graph}. 
Differentiation under the Laplace transform in a parameter $x$ is legitimate whenever $u(x,\cdot)$ and $\partial_x u(x,\cdot)$ are, 
locally uniformly in $x$, of exponential order in $t$, and $\partial_x u$ is continuous jointly in $(x,t)$, cf.\ \cite{Doetsch1974}. 
Lerch's uniqueness theorem states that if two functions $f$, $g$, continuous on $[0,\infty)$ and of exponential order, 
have the same Laplace transform on some right half-plane, then $f\equiv g$ \cite{Doetsch1974}.
We use this property to pass from ``the Laplace transform of a difference vanishes'' to ``the difference vanishes'' in the uniqueness arguments of Propositions~\ref{prop:uniqueness} and~\ref{prop:wellposed}.

\begin{assumption}[Standing regularity hypothesis]\label{ass:regularity}
Throughout Sections~\ref{sec:line}--\ref{sec:graph}, we assume that the solutions under consideration exist and admit a Laplace transform in $t$, for every fixed $x$: 
for solutions posed on a domain $x>0$, this transform is assumed bounded as $x\to\infty$, for $\Real(s)>0$; for solutions posed on a domain $x<0$, it is assumed bounded as $x\to-\infty$, for $\Real(s)>0$.
\end{assumption}

This Assumption~\ref{ass:regularity} is not idle. 
For the Cauchy problem on the whole real line, existence and an explicit representation are furnished by Mainardi's fundamental solution~\cite{Mainardi1996} (built from the M-Wright function), whose Gaussian-type spatial decay for every fixed $t>0$ gives exactly the boundedness required by Assumption~\ref{ass:regularity}.
For the associated initial-boundary value problem on a bounded or half-infinite domain, well-posedness in the appropriate weak or strong sense is established by Sakamoto and Yamamoto~\cite{SakamotoYamamoto2011} via eigenfunction expansion and Mittag-Leffler functions.
We do not prove existence again here, relying instead on these references. However, uniqueness follows from Assumption~\ref{ass:regularity} itself, 
as Propositions~\ref{prop:uniqueness} and~\ref{prop:wellposed} below show.

\section{TBC for the Time-Fractional Heat Equation on a Line}\label{sec:line}
We consider the following \textit{time-fractional heat equation}
\begin{equation}\label{eq:fracheat}
    D_t^\beta u(x,t) = \kappa^2\,\partial_x^2 u(x,t), \qquad -\infty<x<\infty,\; t>0.
\end{equation}
Our goal is to relate the boundary trace $u(0,t)$ to the flux $\partial_x u(0,t)$ at an artificial boundary $x=0$, so that a computational domain truncated at $x=0$ reproduces exactly the solution that would be obtained on the full, unbounded line.

\begin{theorem}[Exterior fractional TBC]\label{thm:exterior}
Let $u$ solve \eqref{eq:fracheat} for $x>0$, $t>0$, with quiescent data $u(x,0)=0$ for $x>0$ and $u(0,0)=0$, subject to Assumption~\ref{ass:regularity}. 
Then
\begin{equation} \label{eq:TBCext}
   \frac{\partial u}{\partial x}\bigg|_{x=0} 
   = -\frac{1}{\kappa\,\Gamma(1-\beta/2)}\int_0^t \frac{1}{(t-\tau)^{\beta/2}}\,\frac{\partial u(0,\tau)}{\partial \tau}\,d\tau.
\end{equation}
\end{theorem}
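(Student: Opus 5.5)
The plan is to apply the Laplace transform in $t$, solve the resulting ODE in $x$, keep only the bounded branch, and invert.

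\emph{Transformed problem.} Write $\widehat u(x,s)$ for the transform of $u(x,\cdot)$, with $\Real(s)>0$. Since $u(x,0)=0$ for $x\ge 0$, Lemma~\ref{lem:LTcaputo} turns \eqref{eq:fracheat} into
\begin{equation*}
  \kappa^2\,\partial_x^2\widehat u(x,s) = s^\beta\,\widehat u(x,s), \qquad x>0 .
\end{equation*}
Interchanging $\partial_x^2$ and $\mathcal{L}$ is legitimate by the differentiation-under-the-transform fact recorded after Lemma~\ref{lem:branch}.

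\emph{Bounded-solution selection.} The general solution is
\begin{equation*}
  \widehat u(x,s) = A(s)\,\mathrm{e}^{-s^{\beta/2}x/\kappa} + B(s)\,\mathrm{e}^{s^{\beta/2}x/\kappa},
\end{equation*}
where $s^{\beta/2}$ is the principal branch, so $(s^{\beta/2})^2=s^\beta$. By Lemma~\ref{lem:branch} with $\gamma=\beta/2\in(0,1/2]$, we have $\Real(s^{\beta/2})>0$. Hence the second exponential grows without bound as $x\to\infty$, while the first decays. The boundedness in Assumption~\ref{ass:regularity} therefore forces $B(s)=0$. This is the step I expect to need the most care, because it is the only place where the right half-plane and the principal branch really enter. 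Evaluating the remaining solution and its derivative at $x=0$ gives the Dirichlet-to-Neumann relation
\begin{equation*}
  \partial_x\widehat u(0,s) = -\frac{s^{\beta/2}}{\kappa}\,\widehat u(0,s).
\end{equation*}

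\emph{Inversion.} Write $s^{\beta/2}\widehat u(0,s)=s^{\beta/2-1}\bigl(s\,\widehat u(0,s)\bigr)$. Because $u(0,0)=0$, we have $s\,\widehat u(0,s)=\mathcal{L}\{\partial_\tau u(0,\cdot)\}(s)$. Moreover $s^{\beta/2-1}=s^{-(1-\beta/2)}$ is the transform of $t^{-\beta/2}/\Gamma(1-\beta/2)$, since $1-\beta/2>0$. By the convolution theorem, $s^{\beta/2}\widehat u(0,s)$ is therefore the transform of the Caputo derivative $D_t^{\beta/2}u(0,t)$, written as the integral in \eqref{eq:integral}. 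This is exactly Lemma~\ref{lem:LTcaputo} with $\alpha=\beta/2$.

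\emph{Conclusion.} Both sides of \eqref{eq:TBCext} are continuous and of exponential order and have the same Laplace transform on $\Real(s)>0$. Lerch's uniqueness theorem then gives \eqref{eq:TBCext}. The only technical point left is continuity of the right-hand side. It follows from $\partial_\tau u(0,\cdot)\in L^1_{\mathrm{loc}}$ being of exponential order, since its convolution with the integrable kernel $\tau^{-\beta/2}$ is continuous. If that regularity is not available, I would state the identity almost everywhere instead.
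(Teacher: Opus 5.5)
Your proposal is correct and follows the paper's proof step for step: Laplace transform via Lemma~\ref{lem:LTcaputo} with $\alpha=\beta$, discarding the growing mode by Lemma~\ref{lem:branch} with $\gamma=\beta/2$, the relation $\partial_x\widehat u(0,s)=-s^{\beta/2}\widehat u(0,s)/\kappa$, and inversion via Lemma~\ref{lem:LTcaputo} with $\alpha=\beta/2$. Your convolution-theorem computation is just that lemma's proof, and your Lerch step makes precise what the paper calls ``applying the inverse Laplace transform.''

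One small correction to your closing side remark: convolving $\tau^{-\beta/2}$ with a merely $L^1_{\mathrm{loc}}$ function need not give a continuous function. For example, $\partial_\tau u(0,\tau)=(\tau-1)^{-a}$ for $\tau>1$ (and $0$ otherwise), with $1-\beta/2<a<1$, makes the convolution blow up like $(t-1)^{1-\beta/2-a}$ as $t\downarrow 1$. So continuity of the right-hand side needs, e.g., $\partial_\tau u(0,\cdot)$ locally bounded or continuous, as in Remark~\ref{rem:integrable}; otherwise your a.e.\ fallback is the right formulation.
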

\begin{proof}
Applying the Laplace transform in $t$ to~\eqref{eq:fracheat} and using Lemma~\ref{lem:LTcaputo} with $\alpha=\beta$ and $u(x,0)=0$,
\begin{equation}
      s^\beta\,\widehat{u}(x,s) = \kappa^2\, \widehat{u}''(x,s), \qquad x>0,
\end{equation}
whose general solution is 
\begin{equation*}
    \widehat{u}(x,s) = A(s)\,\mathrm{e}^{xs^{\beta/2}/\kappa} 
    + B(s)\,\mathrm{e}^{-x s^{\beta/2}/\kappa}.
\end{equation*}
The characteristic roots of this constant-coefficient ODE are $\pm\sqrt{s^\beta}/\kappa$; since $\Real(s)>0$ implies, by Lemma~\ref{lem:branch}, that $s^\beta$ itself lies in the right half-plane, its principal square root coincides with $s^{\beta/2}$, justifying the ansatz. 
By Lemma~\ref{lem:branch} (with $\gamma=\beta/2$), $\Real(s^{\beta/2})>0$ for $\Real(s)>0$, so $\mathrm{e}^{xs^{\beta/2}/\kappa}\to\infty$ as $x\to\infty$; Assumption~\ref{ass:regularity} forces $A(s)=0$, hence
\begin{equation*}
   \widehat{u}(x,s) = \widehat{u}(0,s)\, \mathrm{e}^{-x s^{\beta/2}/\kappa}.
\end{equation*}
Differentiating with respect to $x$ (justified by the interchange fact recorded in Section~\ref{sec:prelim}) and evaluating at $x=0$,
\begin{equation}\label{eq:LTflux}
     \widehat{u}'(0,s) = -\frac{s^{\beta/2}}{\kappa}\,\widehat{u}(0,s). 
\end{equation}
Applying the inverse Laplace transform to~\eqref{eq:LTflux}, 
and using Lemma~\ref{lem:LTcaputo} with $\alpha=\beta/2$ (noting $u(0,0)=0$), the multiplier $s^{\beta/2}$ maps to a Caputo fractional derivative of order $\beta/2$ in the time domain, 
yielding~\eqref{eq:TBCext}.
\end{proof}

\begin{remark}[Integrability of the kernel]\label{rem:integrable}
Since $0<\beta/2\le1/2<1$, the kernel $(t-\tau)^{-\beta/2}$ is integrable on $[0,t]$ for every finite $t$: 
\begin{equation*}
    \int_0^t (t-\tau)^{-\beta/2}\,d\tau 
    = t^{1-\beta/2}/(1-\beta/2)<\infty.
\end{equation*}
The operator in~\eqref{eq:TBCext} is thus a genuine (weakly singular but well-defined) convolution acting on any $u(0,\cdot)\in C^1[0,T]$, for any fixed $T>0$; no regularity beyond what is already needed to form $\partial_\tau u(0,\tau)$ pointwise is required for the integral to make sense.
\end{remark}

\begin{proposition}[Uniqueness for the exterior problem]\label{prop:uniqueness}
Suppose $u_1,u_2$ both solve~\eqref{eq:fracheat} for $x>0$ with $u_1(x,0)=u_2(x,0)=0$ and the same boundary trace $u_1(0,t)=u_2(0,t)=g(t)$, both subject to Assumption~\ref{ass:regularity}. Then $u_1\equiv u_2$.
\end{proposition}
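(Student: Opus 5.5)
We argue by linearity: let $w:=u_1-u_2$. Since \eqref{eq:fracheat} is linear and the Caputo derivative is linear, $w$ again solves \eqref{eq:fracheat} for $x>0$, $t>0$. It has zero initial data $w(x,0)=0$ and zero boundary trace $w(0,t)=0$ for all $t\ge0$. Moreover, $w$ satisfies Assumption~\ref{ass:regularity}, since a difference of two Laplace transforms that are each bounded as $x\to\infty$ is again bounded. The goal is then to show $w\equiv0$.

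Next we repeat the first half of the proof of Theorem~\ref{thm:exterior} for $w$. By Lemma~\ref{lem:LTcaputo} with $\alpha=\beta$ and $w(x,0)=0$, the transform $\widehat w(x,s)$ satisfies the ODE $s^\beta\widehat w=\kappa^2\widehat w''$ for $x>0$ and each fixed $s$ with $\Real(s)>0$. Its general solution is $A(s)\mathrm{e}^{xs^{\beta/2}/\kappa}+B(s)\mathrm{e}^{-xs^{\beta/2}/\kappa}$. By Lemma~\ref{lem:branch} with $\gamma=\beta/2$, the first exponential is unbounded as $x\to\infty$, so boundedness forces $A(s)=0$. Hence $\widehat w(x,s)=\widehat w(0,s)\,\mathrm{e}^{-xs^{\beta/2}/\kappa}$. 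Since $w(0,\cdot)\equiv0$, we get $\widehat w(0,s)=0$, and therefore $\widehat w(x,s)=0$ for every $x>0$ and every $s$ with $\Real(s)>0$.

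Finally, for each fixed $x>0$ we apply Lerch's uniqueness theorem (recorded in Section~\ref{sec:prelim}) to the function $t\mapsto w(x,t)$. Its Laplace transform vanishes on the right half-plane, so $w(x,\cdot)\equiv0$ on $[0,\infty)$. Since $x>0$ was arbitrary, $u_1\equiv u_2$.

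The only delicate point is the hypotheses of Lerch's theorem. It requires $w(x,\cdot)$ to be continuous in $t$ and of exponential order, which goes slightly beyond "admits a Laplace transform". We handle this by reading Assumption~\ref{ass:regularity} as including the regularity already used for $u$ itself: $u$ is absolutely continuous in $t$ for the Caputo derivative to make sense, and of exponential order for the transform to exist on $\Real(s)>0$. Both properties pass to the difference $w$. If one prefers to avoid continuity, the $L^1_{\mathrm{loc}}$ version of Lerch's theorem still yields $w(x,t)=0$ for almost every $t$, which upgrades to everywhere by continuity of $w$ in $t$.
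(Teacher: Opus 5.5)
Your proof is correct and follows the paper's argument essentially step for step. You form $w=u_1-u_2$, Laplace-transform to the ODE $s^\beta\widehat w=\kappa^2\widehat w''$, discard the growing mode via Lemma~\ref{lem:branch}, use the vanishing trace to kill the remaining coefficient, and conclude with Lerch's theorem; your discussion of the continuity and exponential-order hypotheses matches the paper's brief justification that $w(x,\cdot)$ is of exponential order and continuous as a difference of classical solutions, and is somewhat more careful than it.
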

\begin{proof}
Let $u_1,u_2$ solve the problem of Theorem~\ref{thm:exterior} with the same boundary trace $g(t):=u_1(0,t)=u_2(0,t)$, and set $w:=u_1-u_2$.
By linearity, $w$ solves~\eqref{eq:fracheat} for $x>0$ with quiescent initial data $w(x,0)=0$, and its Laplace transform satisfies the same ODE as in the proof of Theorem~\ref{thm:exterior},
\begin{equation}\label{eq:unique-ode}
    s^\beta\,\widehat{w}(x,s) = \kappa^2\,\widehat{w}''(x,s), \qquad x>0, 
\end{equation}
with general solution $\widehat{w}(x,s) = A(s)\mathrm{e}^{xs^{\beta/2}/\kappa} + B(s)\mathrm{e}^{-xs^{\beta/2}/\kappa}$. 
The boundedness argument of that proof forces $A(s)=0$, so
\begin{equation}\label{eq:unique-gen}
    \widehat{w}(x,s) = B(s)\,\mathrm{e}^{-x s^{\beta/2}/\kappa}. 
\end{equation}
The boundary traces cancel,
\begin{equation}\label{eq:unique-bc}
     \widehat{w}(0,s) = \widehat{u}_1(0,s) - \widehat{u}_2(0,s) 
      = \widehat{g}(s) - \widehat{g}(s) = 0, 
\end{equation}
and evaluating~\eqref{eq:unique-gen} at $x=0$ gives $B(s)=\widehat{w}(0,s)=0$ by~\eqref{eq:unique-bc}. 
Hence, we have
\begin{equation}\label{eq:unique-vanish}
    \widehat{w}(x,s) \equiv 0 \qquad \text{for every}\quad x>0,\ \Real(s)>0. 
\end{equation}
By Assumption~\ref{ass:regularity}, $w(x,\cdot)$ is of exponential order for each fixed $x$, and it is continuous in $t$ as a difference of classical solutions with continuous data; Lerch's uniqueness theorem~\cite{Doetsch1974} then upgrades~\eqref{eq:unique-vanish} to
\begin{equation*}
     w(x,t) = 0 \qquad \text{for every}\quad x>0,\ t\ge0,
\end{equation*}
i.e.\ $u_1\equiv u_2$.
\end{proof}
The existence of a solution realizing a prescribed boundary trace $g$ is expected to follow from the inversion formula 
\begin{equation*}
   u(x,t) = \mathcal{L}^{-1}\{\widehat{g}(s)\mathrm{e}^{-xs^{\beta/2}/\kappa}\}(t),
\end{equation*}
whose kernel is expressible in closed form via the M-Wright function~\cite{Mainardi1996} and known to decay algebraically rather than exponentially in $t$, a qualitative signature of sub-diffusion documented in detail in~\cite{GorenfloKilbasMainardiRogosin2014}. 
We do not carry out this construction in detail here (in particular, we do not verify that the resulting $u$ satisfies Assumption~\ref{ass:regularity} for a specific, sharply delineated class of boundary data $g$).
A complete existence proof, presumably requiring $g$ Hölder continuous with $g(0)=0$ along the lines of~\cite{SakamotoYamamoto2011}, is left for future work together with the existence question raised after Proposition~\ref{prop:wellposed}.

\begin{corollary}[Interior fractional TBC]\label{thm:interior}
Under the mirrored hypotheses of Theorem~\ref{thm:exterior} on the half-line $x<0$,
\begin{equation}\label{eq:TBCint}
    \frac{\partial u}{\partial x}\bigg|_{x=0} = +\frac{1}{\kappa\,\Gamma(1-\beta/2)}
    \int_0^t \frac{1}{(t-\tau)^{\beta/2}}\,\frac{\partial u(0,\tau)}{\partial \tau}\,d\tau. 
\end{equation}
\end{corollary}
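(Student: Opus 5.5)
The plan is to reduce the corollary to Theorem~\ref{thm:exterior} by the reflection $x\mapsto -x$. Given $u$ solving \eqref{eq:fracheat} on $x<0$ with quiescent data and boundedness as $x\to-\infty$, I set $v(y,t):=u(-y,t)$ for $y>0$. Since $\partial_y^2 v(y,t)=\partial_x^2u(-y,t)$ and the Caputo derivative acts only in $t$, $v$ solves \eqref{eq:fracheat} on $y>0$. It also inherits $v(y,0)=0$ and $v(0,0)=u(0,0)=0$. Its Laplace transform $\widehat v(y,s)=\widehat u(-y,s)$ is bounded as $y\to\infty$ for $\Real(s)>0$, which is exactly the half-line $x>0$ clause of Assumption~\ref{ass:regularity}.

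Theorem~\ref{thm:exterior} applied to $v$ then gives
\[
\partial_y v(0,t)=-\frac{1}{\kappa\,\Gamma(1-\beta/2)}\int_0^t (t-\tau)^{-\beta/2}\,\partial_\tau v(0,\tau)\,d\tau .
\]
The chain rule gives $\partial_y v(0,t)=-\partial_x u(0,t)$, while $v(0,\tau)=u(0,\tau)$. Substituting flips the sign on the left and yields \eqref{eq:TBCint}.

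As a cross-check, or as an alternative self-contained route, I would redo the Laplace argument directly. On $x<0$ the general solution is $\widehat u=A\mathrm{e}^{xs^{\beta/2}/\kappa}+B\mathrm{e}^{-xs^{\beta/2}/\kappa}$. By Lemma~\ref{lem:branch}, $\Real(s^{\beta/2})>0$, so the $B$-term blows up as $x\to-\infty$ and boundedness forces $B=0$. This gives $\widehat u'(0,s)=+\frac{s^{\beta/2}}{\kappa}\widehat u(0,s)$. Inverting via Lemma~\ref{lem:LTcaputo} with $\alpha=\beta/2$, using $u(0,0)=0$, produces the Caputo derivative of order $\beta/2$ with a plus sign.

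There is no real obstacle. The only point needing care is verifying that the reflected function meets the regularity and interchange hypotheses, namely exponential order and joint continuity of $\partial_x u$. These are invariant under $x\mapsto-x$, so that check is immediate.
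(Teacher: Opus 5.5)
Your proposal is correct and is essentially the paper's proof: the reflection $v(y,t)=u(-y,t)$, then Theorem~\ref{thm:exterior} applied to $v$, then the chain-rule sign flip $\partial_y v(0,t)=-\partial_x u(0,t)$ together with $v(0,\cdot)=u(0,\cdot)$. Your direct Laplace cross-check, which discards the mode $\mathrm{e}^{-xs^{\beta/2}/\kappa}$ as $x\to-\infty$, is also valid.
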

\begin{proof}
Define $v(x,t):=u(-x,t)$ for $x>0$.
Since~\eqref{eq:fracheat} involves only $\partial_x^2$ and $D_t^\beta$, both unaffected by $x\mapsto-x$, $v$ solves~\eqref{eq:fracheat} for $x>0$ with
\begin{equation*}
    v(x,0) = u(-x,0) = 0, \qquad v(0,0) = u(0,0) = 0,
\end{equation*}
and $v$ bounded as $x\to\infty$ (transferred from the boundedness of $u$ as $x\to-\infty$ in Assumption~\ref{ass:regularity}). Theorem~\ref{thm:exterior} therefore applies to $v$:
\begin{equation}\label{eq:TBCint-vproof}
     \frac{\partial v}{\partial x}(0,t)
     = -\frac{1}{\kappa\,\Gamma(1-\beta/2)}\int_0^t \frac{1}{(t-\tau)^{\beta/2}}\,\frac{\partial v(0,\tau)}{\partial \tau}\,d\tau 
     = -\frac{1}{\kappa}\,D_t^{\beta/2}v(0,t). 
\end{equation}
By the chain rule,
\begin{equation}\label{eq:TBCint-chain}
    \partial_x v(x,t) = -\partial_x u(-x,t) \qquad\Longrightarrow\qquad \partial_x v(0,t) = -\partial_x u(0,t), 
\end{equation}
while $v(0,t)=u(-0,t)=u(0,t)$ identically, so
\begin{equation}\label{eq:TBCint-caputo}
D_t^{\beta/2}v(0,t) = D_t^{\beta/2}u(0,t). 
\end{equation}
Substituting~\eqref{eq:TBCint-chain} and~\eqref{eq:TBCint-caputo} into~\eqref{eq:TBCint-vproof} yields
\begin{equation*}
    -\partial_x u(0,t) = -\frac{1}{\kappa}\,D_t^{\beta/2}u(0,t) \qquad\Longrightarrow\qquad \partial_x u(0,t) = \frac{1}{\kappa}\,D_t^{\beta/2}u(0,t).
\end{equation*}
\end{proof}

The sign flip relative to~\eqref{eq:TBCext} is attributed to the reversal of the outward normal, which the reflection $x\mapsto-x$ used above makes precise. 
This non-local-in-time, mildly singular, convolution-type boundary operator of order $\beta/2$ serves.

\section{Extension to the Star Graph}\label{sec:graph}
In this section we consider a simple metric star graph with three bonds $b_j$, $j=1,2,3$:
the incoming bond $b_1\sim(-\infty,0]$, carrying the physical (generally non-quiescent) temperature profile $u_1(x,0)=f_1^I(x)$, and two outgoing bonds $b_{2,3}\sim[0,+\infty)$, initially quiescent. 
On each bond $b_j$ we consider
\begin{equation}\label{eq:fracheatgraph}
    D_t^\beta u_j(x,t) = \kappa_j^2\, \partial_x^2 u_j(x,t), \qquad x\in b_j, 
\end{equation}
with a common fractional order $\beta$ across all three bonds.
We will return to \textit{bond-dependent orders} in Section~\ref{sec:discussion}. 
At the vertex, the temperature $u$ is continuous and the flux obeys the \textit{Kirchhoff-type balance}
\begin{equation} \label{eq:vertexconds}
u_1(0,t)=u_2(0,t)=u_3(0,t), \qquad \kappa_1^2\,\frac{\partial u_1}{\partial x}(0,t) = \kappa_2^2\,\frac{\partial u_2}{\partial x}(0,t) + \kappa_3^2\,\frac{\partial u_3}{\partial x}(0,t).
\end{equation}
We decouple the network into an interior problem on the incoming bond $b_1$ and exterior problems on the two outgoing bonds $b_{2,3}$.

\begin{theorem}[Fractional vertex TBC]\label{thm:vertex}
Under~\eqref{eq:fracheatgraph}--\eqref{eq:vertexconds}, with $u_1(0,\cdot)$ admitting a Laplace transform for $\Real(s)>0$, $u_1(0,0)=0$, and Assumption~\ref{ass:regularity} applied to $u_2,u_3$ on $b_{2,3}$,
we have the fractional vertex TBC
\begin{equation}\label{eq:vertexTBC}
    \frac{\partial u_1}{\partial x}(0,t) = -\frac{\kappa_2+\kappa_3}{\kappa_1^2\,\Gamma(1-\beta/2)}\int_0^t \frac{1}{(t-\tau)^{\beta/2}}\,\frac{\partial u_1}{\partial \tau}(0,\tau)\,d\tau. 
\end{equation}
\end{theorem}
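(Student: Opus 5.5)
The plan is to apply the exterior TBC of Theorem~\ref{thm:exterior} separately on each outgoing bond $b_2$ and $b_3$. We then substitute both results into the Kirchhoff balance in~\eqref{eq:vertexconds} and use continuity at the vertex to express everything through the trace $u_1(0,\cdot)$. No information about the interior of $b_1$ is needed, since the statement only concerns the operator seen by the incoming bond. In particular, the non-quiescent initial data $f_1^I$ on $b_1$ never enters.

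First, we check the hypotheses of Theorem~\ref{thm:exterior} for $u_j$, $j=2,3$. Each $u_j$ solves~\eqref{eq:fracheatgraph} on $x>0$ with diffusivity $\kappa_j$ and quiescent data $u_j(x,0)=0$, and Assumption~\ref{ass:regularity} holds by hypothesis. The remaining condition $u_j(0,0)=0$ follows from vertex continuity, since $u_j(0,0)=u_1(0,0)=0$. Since $u_j(0,t)=u_1(0,t)$ has a Laplace transform by hypothesis, the same is true of the traces on $b_{2,3}$. Theorem~\ref{thm:exterior} then gives
\begin{equation*}
\partial_x u_j(0,t) = -\frac{1}{\kappa_j}\,D_t^{\beta/2}u_j(0,t) = -\frac{1}{\kappa_j}\,D_t^{\beta/2}u_1(0,t), \qquad j=2,3,
\end{equation*}
where the second equality again uses continuity. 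Equivalently, in the Laplace domain, $\widehat{u}_j'(0,s)=-s^{\beta/2}\widehat{u}_1(0,s)/\kappa_j$.

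Next, we substitute into the flux balance. The factor $\kappa_j^2$ multiplying $-1/\kappa_j$ leaves $-\kappa_j$, so
\begin{equation*}
\kappa_1^2\,\partial_x u_1(0,t) = -(\kappa_2+\kappa_3)\,D_t^{\beta/2}u_1(0,t).
\end{equation*}
Dividing by $\kappa_1^2$ and writing out the Caputo derivative of order $\beta/2$ via~\eqref{eq:integral} yields~\eqref{eq:vertexTBC}. The weakly singular kernel is well defined by Remark~\ref{rem:integrable}.

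The only delicate point is the regularity bookkeeping. We need the common trace to be regular enough, namely absolutely continuous with an exponential-order derivative, for the Caputo form and Lemma~\ref{lem:LTcaputo} to apply. We also need to make sure that vertex continuity genuinely transfers $u_1(0,0)=0$ to the outgoing bonds. The cleanest route is to do the combination in the Laplace domain, $\kappa_1^2\widehat{u}_1'(0,s)=-(\kappa_2+\kappa_3)s^{\beta/2}\widehat{u}_1(0,s)$, and invert only once, exactly as in the proof of Theorem~\ref{thm:exterior}. This way the inversion step is justified by Lemma~\ref{lem:LTcaputo} with $\alpha=\beta/2$ and Lerch's theorem.
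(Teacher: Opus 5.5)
Your proposal is correct and is essentially the paper's proof. The paper also applies the exterior construction on $b_2,b_3$ to get $\partial_x\widehat{u}_{2,3}(0,s)=-s^{\beta/2}\widehat{u}_1(0,s)/\kappa_{2,3}$ via vertex continuity, substitutes this into the Laplace-transformed Kirchhoff condition, and inverts once using Lemma~\ref{lem:LTcaputo} with $\alpha=\beta/2$ and $u_1(0,0)=0$. That is exactly your ``cleanest route,'' and your purely time-domain combination through linearity of $D_t^{\beta/2}$ is equally valid; it even avoids having to Laplace-transform $\partial_x u_1(0,\cdot)$ separately.
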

\begin{proof}
Repeating the argument of Theorem~\ref{thm:exterior} on $b_2$, $b_3$, with $\kappa$ replaced by $\kappa_{2,3}$, gives
\begin{equation}\label{eq:branchflux}
     \frac{\partial \widehat{u}_{2,3}}{\partial x}(0,s) = -\frac{s^{\beta/2}}{\kappa_{2,3}}\,\widehat{u}_{2,3}(0,s) = -\frac{s^{\beta/2}}{\kappa_{2,3}}\,\widehat{u}_1(0,s), 
\end{equation}
the second equality using the continuity condition in~\eqref{eq:vertexconds}. Substituting~\eqref{eq:branchflux} into the 
Laplace-transformed Kirchhoff condition,
\begin{equation}\label{eq:vertexLT}
\begin{split}
\kappa_1^2\,\frac{\partial\widehat{u}_1}{\partial x}(0,s) 
&= \kappa_2^2\left(-\frac{s^{\beta/2}}{\kappa_2}\right)\widehat{u}_1(0,s) + \kappa_3^2\left(-\frac{s^{\beta/2}}{\kappa_3}\right)\widehat{u}_1(0,s) \\
&= -(\kappa_2+\kappa_3)\,s^{\beta/2}\,\widehat{u}_1(0,s). 
\end{split}
\end{equation}
By Lemma~\ref{lem:LTcaputo} with $\alpha=\beta/2$, $\mathcal{L}\{D_t^{\beta/2}u_1(0,\cdot)\}(s) = s^{\beta/2}\widehat{u}_1(0,s) - s^{\beta/2-1}u_1(0,0)$; 
since we have assumed $u_1(0,0)=0$, this reduces to $\mathcal{L}\{D_t^{\beta/2}u_1(0,\cdot)\}(s) = s^{\beta/2}\widehat{u}_1(0,s)$, the same clean form obtained for the exterior problem in Theorem~\ref{thm:exterior}. 
Applying the inverse Laplace transform to \eqref{eq:vertexLT} in the form
\begin{equation*}
    \partial_x\widehat{u}_1(0,s) = -\big[(\kappa_2+\kappa_3)/\kappa_1^2\big]s^{\beta/2}\widehat{u}_1(0,s)
\end{equation*}
    therefore gives the TBC~\eqref{eq:vertexTBC}.
\end{proof}

A few points about Theorem~\ref{thm:vertex} deserve comment before we turn to the sum rule itself. 
First, the hypothesis $u_1(0,0)=0$ is not automatic: bond~1 carries the generally non-quiescent profile $u_1(x,0)=f_1^I(x)$ introduced above, so $u_1(0,0)=f_1^I(0)$, and Theorem~\ref{thm:vertex} as stated requires $f_1^I(0)=0$, i.e.\ the incoming datum must vanish exactly at the vertex, though it need not vanish anywhere else on $b_1$. 
If $f_1^I(0)\ne0$, inverting $s^{\beta/2}\widehat{u}_1(0,s)$ produces an additional impulsive term $\dfrac{f_1^I(0)}{\Gamma(1-\beta/2)}\,t^{-\beta/2}$ on the right-hand side of~\eqref{eq:vertexTBC}, coming from the $-s^{\beta/2-1}u_1(0,0)$ term in Lemma~\ref{lem:LTcaputo}. 
This hypothesis holds, in particular, whenever $f_1^I$ is compactly supported away from the vertex or, as in the Gaussian pulse used in Section~\ref{sec:numerics}, decays fast enough that $f_1^I(0)$ is negligible to machine precision; it does not hold for data concentrated at or near the vertex itself, a case Theorem~\ref{thm:vertex} does not cover. The uniqueness result of Proposition~\ref{prop:wellposed} below does not need this hypothesis, since its proof never inverts the Laplace transform back to the time domain. In the case of non-compact data one might explore the ideas outlined in \cite{ehrhardt2008discrete}.

Second, and most importantly for what follows: bond~1 occupies the half-line $x\le0$, the same orientation as the interior problem of Corollary~\ref{thm:interior}, and it is tempting to test transparency by comparing~\eqref{eq:vertexTBC} directly against the interior TBC~\eqref{eq:TBCint} evaluated at $\kappa=\kappa_1$. 
This is not the correct comparison, and doing so would make the two operators' prefactors, $-(\kappa_2+\kappa_3)/\kappa_1^2$ and $+1/\kappa_1$, agree for no positive $\kappa_1$, $\kappa_2$, $\kappa_3$. 
The reason is that~\eqref{eq:TBCint} answers a different question:
it is the condition felt at $x=0$ by a domain $x<0$ that is itself the one carrying the solution, with the half-line $x>0$ discarded. 
Transparency of the junction asks something else: whether the pair of outgoing bonds $b_2$, $b_3$ each of which draws flux from the vertex via the \textit{exterior} construction of Theorem~\ref{thm:exterior}, exactly as in~\eqref{eq:branchflux} is, in aggregate, indistinguishable from a single hypothetical outgoing bond $b_1'$ of diffusivity $\kappa_1$, sharing the vertex with $b_1$ alone and quiescent like $b_2$, $b_3$.
By Theorem~\ref{thm:exterior} applied to $b_1'$ (with $\kappa=\kappa_1$ in place of $\kappa_{2,3}$), such a bond draws exactly the flux
\begin{equation}\label{eq:TBCext1}
   \frac{\partial u_1}{\partial x}(0,t) = -\frac{1}{\kappa_1\,\Gamma(1-\beta/2)}\int_0^t \frac{1}{(t-\tau)^{\beta/2}}\,\frac{\partial u_1}{\partial \tau}(0,\tau)\,d\tau,
\end{equation}
i.e.\ the \textit{exterior} relation~\eqref{eq:TBCext} with $\kappa=\kappa_1$, carrying the same negative sign as~\eqref{eq:vertexTBC} and as~\eqref{eq:branchflux} itself  not the interior relation~\eqref{eq:TBCint}. 
This is the comparison used in the Proposition~\ref{prop:invariance}.

\begin{proposition}[$\beta$-invariance of the sum rule]\label{prop:invariance}
Under the hypotheses of Theorem~\ref{thm:vertex}, the vertex operator~\eqref{eq:vertexTBC} coincides with the single-bond exterior relation~\eqref{eq:TBCext1} above (Theorem~\ref{thm:exterior} with $\kappa=\kappa_1$), for every $\beta\in(0,1]$, if and only if
\begin{equation}\label{eq:sumrule2}
    \kappa_1 = \kappa_2 + \kappa_3. 
\end{equation}
\end{proposition}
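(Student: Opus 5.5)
The plan is to compare the two operators through their prefactors, working in the Laplace domain where both are multiplication operators. By Theorem~\ref{thm:vertex}, the vertex relation~\eqref{eq:vertexTBC} is the statement
\begin{equation*}
\partial_x\widehat{u}_1(0,s) = -\frac{\kappa_2+\kappa_3}{\kappa_1^2}\,s^{\beta/2}\,\widehat{u}_1(0,s).
\end{equation*}
By Theorem~\ref{thm:exterior} with $\kappa=\kappa_1$, the single-bond relation~\eqref{eq:TBCext1} is the statement
\begin{equation*}
\partial_x\widehat{u}_1(0,s) = -\frac{1}{\kappa_1}\,s^{\beta/2}\,\widehat{u}_1(0,s),
\end{equation*}
for $\Real(s)>0$. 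In the time domain both are $c\,D_t^{\beta/2}u_1(0,\cdot)$ for constants $c_{\mathrm{v}}=-(\kappa_2+\kappa_3)/\kappa_1^2$ and $c_{\mathrm{e}}=-1/\kappa_1$. Both share the same kernel $(t-\tau)^{-\beta/2}/\Gamma(1-\beta/2)$, which is integrable by Remark~\ref{rem:integrable}. ``Coincide as operators'' therefore reduces to equality of the scalar prefactors.

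For sufficiency, assume $\kappa_1=\kappa_2+\kappa_3$. Then $c_{\mathrm{v}}=-\kappa_1/\kappa_1^2=-1/\kappa_1=c_{\mathrm{e}}$, so the two right-hand sides of~\eqref{eq:vertexTBC} and~\eqref{eq:TBCext1} are literally the same expression for every $\beta\in(0,1]$ and every admissible trace $u_1(0,\cdot)$. The key observation to record is that $\beta$ enters only through the common factor $s^{\beta/2}$, equivalently the common kernel. It never enters the prefactor, which comes purely from $\kappa_j^2\cdot(1/\kappa_j)=\kappa_j$ in~\eqref{eq:vertexLT}.

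For necessity, assume the operators coincide, meaning they give the same output for every admissible trace $g=u_1(0,\cdot)$ with $g(0)=0$. Then $(c_{\mathrm{v}}-c_{\mathrm{e}})D_t^{\beta/2}g\equiv 0$ for all such $g$. To conclude that the prefactors agree, I will exhibit one trace with $D_t^{\beta/2}g\not\equiv0$. I would take $g(t)=t$, for which $D_t^{\beta/2}g=t^{1-\beta/2}/\Gamma(2-\beta/2)>0$ for $t>0$. Alternatively, I can argue in the Laplace domain: $s^{\beta/2}\widehat g(s)$ is not identically zero whenever $\widehat g\not\equiv0$, and Lerch's theorem transfers this back to the time domain. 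This gives $(\kappa_2+\kappa_3)/\kappa_1^2=1/\kappa_1$. Since $\kappa_1>0$, multiplying by $\kappa_1^2$ yields~\eqref{eq:sumrule2}.

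The main obstacle is interpretive rather than computational. I must make precise what ``coincide'' means: equality as operators on the class of admissible traces, not for one particular solution. The necessity direction needs a nontrivial test function, and a specific trivial trace $u_1(0,\cdot)\equiv0$ would satisfy both relations for any $\kappa$'s. I would also stress that the quantifier ``for every $\beta$'' is harmless here. The condition obtained is the same for each fixed $\beta$, so it holds for all $\beta$ if and only if it holds for any single one, which is exactly the claimed $\beta$-invariance.
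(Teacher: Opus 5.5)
Your proposal is correct and follows essentially the same route as the paper: both relations are scalar multiples of the common operator $D_t^{\beta/2}u_1(0,\cdot)$, $\beta$ never enters the prefactors, and coincidence reduces to $-(\kappa_2+\kappa_3)/\kappa_1^2=-1/\kappa_1$. Your explicit witness $g(t)=t$, with $D_t^{\beta/2}g=t^{1-\beta/2}/\Gamma(2-\beta/2)\not\equiv0$, for the necessity direction simply makes precise what the paper leaves implicit in its phrase ``for every admissible boundary trace.''
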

\begin{proof}
Both~\eqref{eq:vertexTBC} and~\eqref{eq:TBCext1} are built from the same non-local kernel $D_t^{\beta/2}u_1(0,\cdot)$ (the latter, like the former, under the standing hypothesis $u_1(0,0)=0$ of Theorem~\ref{thm:vertex}, needed to invert $s^{\beta/2}\widehat{u}_1(0,s)$ cleanly by Lemma~\ref{lem:LTcaputo}); the fractional order $\beta$ enters both sides only through this common factor, which is untouched by the Kirchhoff algebra of Theorem~\ref{thm:vertex}, since that algebra acts solely on the $\kappa_j$-dependent prefactors. 
The two expressions therefore coincide, for every $t$ and every admissible boundary trace, if and only if their scalar prefactors agree: 
$-(\kappa_2+\kappa_3)/\kappa_1^2 = -1/\kappa_1 \iff \kappa_1=\kappa_2+\kappa_3$ (multiplying both sides by $-\kappa_1^2<0$). Since this equivalence does not involve $\beta$, it holds for every $\beta\in(0,1]$.
\end{proof}

This $\beta$-invariance of the sum rule~\eqref{eq:sumrule2} is, to our knowledge, a new observation, and is the central analytical result of this note:
\textit{the transparency of a branched thermal junction is governed entirely by the network's geometry and diffusivities, independent of the memory structure the fractional order $\beta$ of the underlying transport law}.

\begin{corollary}[$n$-bond star graph]\label{thm:nbond}
Consider an $n$-bond star graph with one incoming bond $b_1$ and $n-1$ outgoing bonds $b_2,\ldots,b_n$, each of diffusivity $\kappa_j$, sharing the common order $\beta$, 
and governed by~\eqref{eq:fracheatgraph} with the $n$-bond Kirchhoff condition 
$\kappa_1^2\partial_x u_1(0,t) = \sum_{j=2}^n \kappa_j^2\,\partial_x u_j(0,t)$. 
Then
\begin{equation}\label{eq:nbondTBC}
  \frac{\partial u_1}{\partial x}(0,t) 
  = -\frac{\sum_{j=2}^n\kappa_j}{\kappa_1^2\,\Gamma(1-\beta/2)}\int_0^t\frac{1}{(t-\tau)^{\beta/2}}\frac{\partial u_1}{\partial\tau}(0,\tau)\,d\tau, 
\end{equation}
which, under the standing hypothesis $u_1(0,0)=0$, coincides with the single-bond exterior relation~\eqref{eq:TBCext1} for every $\beta\in(0,1]$ if and only if $\kappa_1=\sum_{j=2}^n\kappa_j$.
\end{corollary}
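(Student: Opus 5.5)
The plan is to repeat the proof of Theorem~\ref{thm:vertex} verbatim with $n-1$ exterior branches in place of two, and then to apply the prefactor comparison of Proposition~\ref{prop:invariance}. I assume the $n$-bond analogue of the hypotheses of Theorem~\ref{thm:vertex}: continuity $u_1(0,t)=u_j(0,t)$ for $j=2,\dots,n$, quiescent outgoing bonds, Assumption~\ref{ass:regularity} for each $u_j$ with $j\ge2$, $u_1(0,\cdot)$ Laplace transformable, and $u_1(0,0)=0$.

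\textbf{Step 1: branch fluxes.} For each $j=2,\dots,n$ I apply Theorem~\ref{thm:exterior} with $\kappa=\kappa_j$, in its Laplace form \eqref{eq:LTflux}. Continuity at the vertex then gives
\begin{equation*}
\partial_x\widehat{u}_j(0,s)=-\frac{s^{\beta/2}}{\kappa_j}\,\widehat{u}_1(0,s),\qquad \Real(s)>0 .
\end{equation*}
This uses Lemma~\ref{lem:branch} to discard the growing exponential on each outgoing bond, exactly as in \eqref{eq:branchflux}.

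\textbf{Step 2: Kirchhoff sum and inversion.} I take the Laplace transform of the $n$-bond Kirchhoff condition and substitute Step 1. Each term becomes $\kappa_j^2\cdot(-s^{\beta/2}/\kappa_j)=-\kappa_j s^{\beta/2}$, so
\begin{equation*}
\kappa_1^2\,\partial_x\widehat{u}_1(0,s)=-\Big(\sum_{j=2}^n\kappa_j\Big)s^{\beta/2}\,\widehat{u}_1(0,s).
\end{equation*}
Since $u_1(0,0)=0$, Lemma~\ref{lem:LTcaputo} with $\alpha=\beta/2$ identifies $s^{\beta/2}\widehat{u}_1(0,s)$ as the transform of $D_t^{\beta/2}u_1(0,\cdot)$. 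Writing that Caputo derivative out through \eqref{eq:integral} gives \eqref{eq:nbondTBC}.

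\textbf{Step 3: the iff.} Both \eqref{eq:nbondTBC} and \eqref{eq:TBCext1} have the form $c\,D_t^{\beta/2}u_1(0,\cdot)$, with $c=-\sum_{j\ge2}\kappa_j/\kappa_1^2$ and $c=-1/\kappa_1$ respectively.
\begin{itemize}
\item If the prefactors are equal, the two relations coincide. Equality $-\sum_{j\ge2}\kappa_j/\kappa_1^2=-1/\kappa_1$ is equivalent to $\kappa_1=\sum_{j\ge2}\kappa_j$, and this condition contains no $\beta$.
\item Conversely, suppose the operators agree for every admissible trace. Testing on one trace with $D_t^{\beta/2}u_1(0,\cdot)\not\equiv0$ forces the prefactors to be equal. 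A suitable trace is $u_1(0,t)=t$, which gives $D_t^{\beta/2}u_1(0,t)=t^{1-\beta/2}/\Gamma(2-\beta/2)\neq0$.
\end{itemize}

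\textbf{Main obstacle.} The only point needing care is the converse direction of Step 3. It requires that the set of admissible traces is rich enough to contain one with nonzero fractional derivative. Proposition~\ref{prop:invariance} takes this for granted, and I handle it with the explicit test trace above. Everything else is linear bookkeeping.
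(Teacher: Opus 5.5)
Your proposal is correct and follows the paper's proof essentially step for step. It applies the exterior Laplace-domain flux relation on each outgoing bond, substitutes into the transformed $n$-bond Kirchhoff condition, inverts via Lemma~\ref{lem:LTcaputo} using $u_1(0,0)=0$, and compares the scalar prefactors. Your one addition, the explicit test trace $u_1(0,t)=t$ with $D_t^{\beta/2}u_1(0,t)=t^{1-\beta/2}/\Gamma(2-\beta/2)\neq0$ for the converse direction, is a small but legitimate tightening of a point the paper (through Proposition~\ref{prop:invariance}) takes for granted.
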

\begin{proof}
Repeating~\eqref{eq:branchflux} on every outgoing bond,
\begin{equation}\label{eq:nbond-branch}
\partial_x\widehat{u}_j(0,s) = -\frac{s^{\beta/2}}{\kappa_j}\,\widehat{u}_1(0,s), \qquad j=2,\ldots,n. 
\end{equation}
Substituting~\eqref{eq:nbond-branch} into the $n$-bond Kirchhoff condition $\kappa_1^2\partial_x u_1(0,t) = \sum_{j=2}^n \kappa_j^2\,\partial_x u_j(0,t)$, in Laplace-transformed form,
\begin{equation}\label{eq:nbond-LT}
  \kappa_1^2\,\frac{\partial\widehat{u}_1}{\partial x}(0,s) 
  = \sum_{j=2}^n \kappa_j^2\left(-\frac{s^{\beta/2}}{\kappa_j}\right)\widehat{u}_1(0,s) 
  = -\Big(\sum_{j=2}^n\kappa_j\Big)\,s^{\beta/2}\,\widehat{u}_1(0,s), 
\end{equation}
i.e.\ $\partial_x\widehat{u}_1(0,s) = -\big[(\sum_{j=2}^n\kappa_j)/\kappa_1^2\big]s^{\beta/2}\widehat{u}_1(0,s)$. 
By Lemma~\ref{lem:LTcaputo} with $u_1(0,0)=0$, $\mathcal{L}\{D_t^{\beta/2}u_1(0,\cdot)\}(s)=s^{\beta/2}\widehat{u}_1(0,s)$, 
so inverting the Laplace transform in~\eqref{eq:nbond-LT} gives~\eqref{eq:nbondTBC}.

Comparing prefactors with~\eqref{eq:TBCext1},
\begin{equation}\label{eq:nbond-prefactor}
    -\frac{\sum_{j=2}^n\kappa_j}{\kappa_1^2} = -\frac{1}{\kappa_1} \qquad\Longleftrightarrow\qquad \kappa_1 = \sum_{j=2}^n\kappa_j, 
\end{equation}
by the same algebra as in the proof of Proposition~\ref{prop:invariance}, with $\kappa_2+\kappa_3$ replaced by $\sum_{j=2}^n\kappa_j$.
\end{proof}
Corollary~\ref{thm:nbond} shows that the sum rule and its $\beta$-invariance generalize verbatim from three bonds to $n$ bonds; the only change is that the sum on the right runs over all outgoing bonds. 
We will discuss further extensions, and their limits, in Section~\ref{sec:discussion}.

\begin{proposition}[Uniqueness for the coupled vertex problem]\label{prop:wellposed}
Let $f_1^I\in L^1(-\infty,0)$ be bounded (e.g.\ compactly supported, or decaying faster than any polynomial, or merely $L^1$ and bounded, such as a Gaussian).
The coupled problem consisting of~\eqref{eq:fracheatgraph} on $b_1$, $b_2$, $b_3$ with $u_1(x,0)=f_1^I(x)$, $u_2(x,0)=u_3(x,0)=0$, and the vertex conditions~\eqref{eq:vertexconds}, has at most one solution satisfying Assumption~\ref{ass:regularity}.
\end{proposition}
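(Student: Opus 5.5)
We take two solutions $(u_1,u_2,u_3)$ and $(\tilde u_1,\tilde u_2,\tilde u_3)$ of the coupled problem, both satisfying Assumption~\ref{ass:regularity}, and set $w_j:=u_j-\tilde u_j$. By linearity each $w_j$ solves~\eqref{eq:fracheatgraph} on $b_j$, with \emph{quiescent} initial data on all three bonds: the nonzero datum $f_1^I$ cancels in $w_1$. The $w_j$ also satisfy the vertex conditions~\eqref{eq:vertexconds}, which are linear. Because the initial data of $w$ vanish, Lemma~\ref{lem:LTcaputo} applies with no $s^{\beta-1}$ term. Hence for $\Real(s)>0$ each $\widehat w_j(\cdot,s)$ satisfies $s^\beta\widehat w_j=\kappa_j^2\widehat w_j''$ on its bond. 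This is where the hypothesis $f_1^I(0)=0$ of Theorem~\ref{thm:vertex} becomes unnecessary: we never invert back to time. The hypotheses on $f_1^I$ (bounded, $L^1$) serve only to make the individual solutions, and hence their Laplace transforms, meaningful.

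The next step is to select the bounded modes exactly as in the proof of Theorem~\ref{thm:exterior}. On $b_{2,3}$, boundedness as $x\to+\infty$ together with Lemma~\ref{lem:branch} gives $\widehat w_j(x,s)=B_j(s)e^{-xs^{\beta/2}/\kappa_j}$. On $b_1$, boundedness as $x\to-\infty$ gives, mirroring Corollary~\ref{thm:interior}, $\widehat w_1(x,s)=A_1(s)e^{xs^{\beta/2}/\kappa_1}$. Continuity at the vertex yields $A_1=B_2=B_3=:C(s)$. The transformed Kirchhoff condition, using the interchange fact of Section~\ref{sec:prelim}, then reads
\begin{equation*}
\kappa_1^2\,\frac{s^{\beta/2}}{\kappa_1}\,C(s) = -\kappa_2^2\,\frac{s^{\beta/2}}{\kappa_2}\,C(s)-\kappa_3^2\,\frac{s^{\beta/2}}{\kappa_3}\,C(s),
\end{equation*}
that is, $(\kappa_1+\kappa_2+\kappa_3)\,s^{\beta/2}\,C(s)=0$. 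Since all $\kappa_j>0$ and $s^{\beta/2}\neq0$ (indeed $\Real s^{\beta/2}>0$ by Lemma~\ref{lem:branch}), we get $C(s)=0$. So $\widehat w_j(x,s)\equiv0$ on every bond for $\Real(s)>0$.

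Finally, Lerch's theorem, applied pointwise in $x$ exactly as in Proposition~\ref{prop:uniqueness}, gives $w_j\equiv0$, hence the two solutions coincide.

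The step needing the most care is the mode selection on $b_1$. Assumption~\ref{ass:regularity} must be applied to $w_1$ on the domain $x<0$, so one should check that boundedness of each $\widehat u_1,\widehat{\tilde u}_1$ as $x\to-\infty$ passes to their difference; it does, trivially. The key structural point is the contrast with the TBC derivation. There, the incoming bond's flux was left free. Here, the interior bond's decaying mode contributes a flux of the \emph{opposite} sign, so the Kirchhoff condition produces the sum $\kappa_1+\kappa_2+\kappa_3$ rather than a difference. That sum can never vanish, which is what forces uniqueness for all $\beta\in(0,1]$.
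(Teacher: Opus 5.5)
Your argument is correct, and its decisive step is the same as the paper's: everything reduces to the nonvanishing of $(\kappa_1+\kappa_2+\kappa_3)\,s^{\beta/2}$ for $\Real(s)>0$. The organization differs, though.

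The paper does not subtract two solutions. It keeps the inhomogeneous term $h(x,s)=-\kappa_1^{-2}s^{\beta-1}f_1^I(x)$ on $b_1$ and builds a bounded particular solution from the whole-line Green's function; this is where $f_1^I\in L^1(-\infty,0)$ is actually used. It first eliminates $b_2,b_3$ by converting the Kirchhoff condition into the Robin condition $\widehat{u}_1'(0,s)=-\frac{\kappa_2+\kappa_3}{\kappa_1^2}s^{\beta/2}\widehat{u}_1(0,s)$. It then shows that this condition determines the free coefficient $C(s)$ of $\mathrm{e}^{\lambda(s)x}$ uniquely. Only at the end does it transfer the conclusion to $u_2,u_3$ via the branch relation.

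By passing to the difference $w$ first, you make all initial data quiescent. No particular solution is needed, and the three bonds enter one homogeneous linear system symmetrically. The hypotheses on $f_1^I$ then play no role beyond ensuring the transforms exist, so your argument is shorter and slightly more general. What the paper's route gives in return is an explicit representation of $\widehat{u}_1(x,s)$ in terms of $f_1^I$. That representation is the natural starting point for the existence theory the paper leaves open.
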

\begin{proof}
Combining bond~1's own equation with its initial datum, the Laplace transform of~\eqref{eq:fracheatgraph} on $b_1$ reads, by Lemma~\ref{lem:LTcaputo} with the inhomogeneous term retained,
\begin{equation}\label{eq:wp-ode}
   \widehat{u}_1''(x,s) - \lambda(s)^2\,\widehat{u}_1(x,s) = h(x,s), \qquad x<0, 
\end{equation}
where we abbreviate
\begin{equation*}
   \lambda(s) := \frac{s^{\beta/2}}{\kappa_1}, \qquad 
   h(x,s) := -\frac{s^{\beta-1}}{\kappa_1^2}\,f_1^I(x),
\end{equation*}
subject to the Robin-type condition
\begin{equation}\label{eq:wp-robin}
     \widehat{u}_1'(0,s) = -\frac{\kappa_2+\kappa_3}{\kappa_1^2}\,s^{\beta/2}\,\widehat{u}_1(0,s) 
\end{equation}
from~\eqref{eq:vertexLT}, together with boundedness as $x\to-\infty$. By Lemma~\ref{lem:branch}, $\Real(\lambda(s))>0$ for $\Real(s)>0$.

Since $f_1^I\in L^1(-\infty,0)$, 
$h(\cdot,s)=-\kappa_1^{-2}s^{\beta-1}f_1^I(\cdot)\in L^1(-\infty,0)$ for every fixed $s$ with $\Real(s)>0$, and a particular solution of~\eqref{eq:wp-ode}, 
bounded as $x\to-\infty$, is furnished by the standard Green's function for $y''-\lambda^2y=h$ on the whole line,
\begin{equation}\label{eq:wp-particular}
   \widehat{u}_{1,\mathrm{part}}(x,s) = -\frac{1}{2\lambda(s)}
   \int_{-\infty}^0 \mathrm{e}^{-\lambda(s)|x-\xi|}\,h(\xi,s)\,d\xi. 
\end{equation}
The general solution of~\eqref{eq:wp-ode} bounded as $x\to-\infty$ is then
\begin{equation}\label{eq:wp-general}
    \widehat{u}_1(x,s) = \widehat{u}_{1,\mathrm{part}}(x,s) + C(s)\,\mathrm{e}^{\lambda(s)x}, 
\end{equation}
where the second homogeneous mode, $\mathrm{e}^{-\lambda(s)x}$, has already been discarded: since $\Real(\lambda(s))>0$, $\mathrm{e}^{-\lambda(s)x}\to\infty$ as $x\to-\infty$, whereas $\mathrm{e}^{\lambda(s)x}\to0$, mirroring the boundedness argument of Corollary~\ref{thm:interior}. 
The constant $C(s)$ remains to be fixed by the Robin condition~\eqref{eq:wp-robin}.

Next, differentiating~\eqref{eq:wp-general} and evaluating both $\widehat{u}_1$ and $\widehat{u}_1'$ at $x=0$,
\begin{equation*}
   \widehat{u}_1(0,s) = \widehat{u}_{1,\mathrm{part}}(0,s) + C(s), \qquad
   \widehat{u}_1'(0,s) = \widehat{u}_{1,\mathrm{part}}'(0,s) + C(s)\,\lambda(s).
\end{equation*}
Substituting into~\eqref{eq:wp-robin} and collecting the terms in $C(s)$,
\begin{equation} \label{eq:wp-coeffeq}
    C(s)\biggl[\lambda(s) + \frac{\kappa_2+\kappa_3}{\kappa_1^2}\,s^{\beta/2}\biggr] 
   = -\frac{\kappa_2+\kappa_3}{\kappa_1^2}\,s^{\beta/2}\,\widehat{u}_{1,\mathrm{part}}(0,s) - \widehat{u}_{1,\mathrm{part}}'(0,s).
\end{equation}
Using $\lambda(s)=s^{\beta/2}/\kappa_1$, the bracket on the left simplifies to
\begin{equation} \label{eq:wp-coeff}
    \lambda(s) + \frac{\kappa_2+\kappa_3}{\kappa_1^2}\,s^{\beta/2} = \frac{s^{\beta/2}}{\kappa_1} + \frac{(\kappa_2+\kappa_3)\,s^{\beta/2}}{\kappa_1^2} = \frac{s^{\beta/2}\,(\kappa_1+\kappa_2+\kappa_3)}{\kappa_1^2},
\end{equation}
which is nonzero for every $s$ with $\Real(s)>0$, since $\kappa_1$, $\kappa_2$, $\kappa_3>0$ and $s^{\beta/2}\neq0$ there.
Equation~\eqref{eq:wp-coeffeq} therefore determines $C(s)$ uniquely, and with it, via~\eqref{eq:wp-general}, $\widehat{u}_1(x,s)$ for every $x<0$ and every $s$ with $\Real(s)>0$.

If $u_1$, $\tilde{u}_1$ both solve the coupled problem, their Laplace transforms consequently agree for every $x<0$ and every such $s$; 
Lerch's uniqueness theorem \cite{Doetsch1974} then gives $u_1\equiv\tilde{u}_1$, and~\eqref{eq:branchflux} propagates this identity to $u_2\equiv\tilde{u}_2$, $u_3\equiv\tilde{u}_3$.
\end{proof}
\begin{remark}
    No hypothesis on $f_1^I(0)$ is needed in Proposition~\ref{prop:wellposed}, in contrast to Theorem~\ref{thm:vertex}: the argument below never inverts the Laplace transform back to the time domain.
\end{remark}

We do not carry out the corresponding existence argument (the explicit Green's-function construction is routine but lengthy) and leave a full existence-and-uniqueness theory for the coupled vertex problem, analogous to~\cite{SakamotoYamamoto2011} on a single interval but adapted to graph vertex conditions, for future work.

\section{Numerical Results}\label{sec:numerics}
To validate the analytical reflectionless condition, we discretize~\eqref{eq:fracheatgraph} on each bond with the standard implicit L1 scheme for the Caputo derivative~\cite{LinXu2007}, combined with a backward (fully implicit) discretization of the spatial Laplacian. With $u_i^n\approx u(x_i,t_n)$, $t_n=n\Delta t$, and weights
\begin{equation*}
   b_j = (j+1)^{1-\beta} - j^{1-\beta}, \qquad j=0,1,2,\ldots,
\end{equation*}
the L1 approximation
\begin{equation*}
     D_t^\beta u(x_i,t_{n+1}) \;\approx\; \frac{1}{\Gamma(2-\beta)\,\Delta t^{\beta}}\sum_{j=0}^{n} b_j\big(u_i^{n+1-j}-u_i^{n-j}\big)
\end{equation*}
gives, at every interior spatial node, a linear equation for $u_i^{n+1}$ in terms of the full nodal history $u_i^0,\ldots,u_i^n$ and the implicit spatial neighbors $u_{i\pm1}^{n+1}$; the scheme is unconditionally stable and converges at rate $2-\beta$ in time~\cite{LinXu2007}. 
Because the weights $b_j$ multiply the entire history at every time step, the scheme's cost and memory footprint scale as $O(N_t^2)$ in the number of time steps $N_t$, rather than $O(N_t)$ as for a classical (integer-order) implicit scheme; for the domain sizes and time horizons used below this remains modest, but a fast convolution technique (e.g.\ based on the sum-of-exponentials approximation of the kernel) would be needed for substantially longer simulations.
The vertex conditions~\eqref{eq:vertexconds} involve only values at the same time level $t_{n+1}$ and are discretized as in the classical Crank-Nicolson scheme: a one-sided finite difference for the Kirchhoff flux, and a single shared vertex unknown enforcing continuity. 
The fractional order $\beta$ affects only the bulk (interior-node) equations, not the vertex-coupling algebra.

Before turning to the sum rule itself, we checked the implementation against the two facts that constrain it independently of any graph structure: 
the $L^1$ weights $b_j$ must reproduce the exact Caputo derivative of a simple test function (we used $f(t)=t^3$, for which $D_t^\beta f$ is known in closed form via $D_t^\beta t^p = \Gamma(p+1)/\Gamma(p+1-\beta)\,t^{p-\beta}$), and the resulting scheme, applied to~\eqref{eq:fracheat} on a single unbranched line, must converge at the expected rates as $\Delta t\to0$ (holding $\Delta x$ fixed and small) and as $\Delta x\to0$ (holding $\Delta t$ fixed and small). 
This check caught a genuine indexing error in an earlier version of our implementation the history differences entering the $L^1$ sum were being paired with the weights $b_j$ in the wrong temporal order which the sum-rule comparison below alone would not have revealed, 
since it degrades the reference and the star-graph solution consistently and largely cancels in their difference. 

Figure~\ref{fig:convergence} shows, for $\beta=0.7$, the corrected scheme's self-convergence: refining $\Delta t$ with $\Delta x=0.025$ fixed gives successive-difference ratios corresponding to an observed rate of about $1.03$--$1.05$, somewhat below the asymptotic rate $2-\beta=1.3$ of~\cite{LinXu2007}; this shortfall is expected: 
solutions of time-fractional equations typically retain only limited time-regularity near $t=0$ even for smooth initial data, a fact established rigorously by Stynes, O'Riordan and Gracia~\cite{StynesORiordanGracia2017} for the Caputo reaction-diffusion problem, who derive sharp pointwise bounds on the solution's time-derivatives showing a weak singularity at $t=0$ and prove that this singularity, not the formal $O(\Delta t^{2-\beta})$ truncation order of the L1 scheme, governs the observed convergence rate on a uniform mesh; the asymptotic rate is recovered only once $\Delta t$ is small relative to this initial-layer scale, or on an appropriately graded mesh. 
Refining $\Delta x$ with $\Delta t=0.02$ fixed gives an observed rate of about $2.0$--$2.2$, matching the expected second-order spatial accuracy cleanly. 

\begin{figure}[h]
\centering
\includegraphics[width=\linewidth]{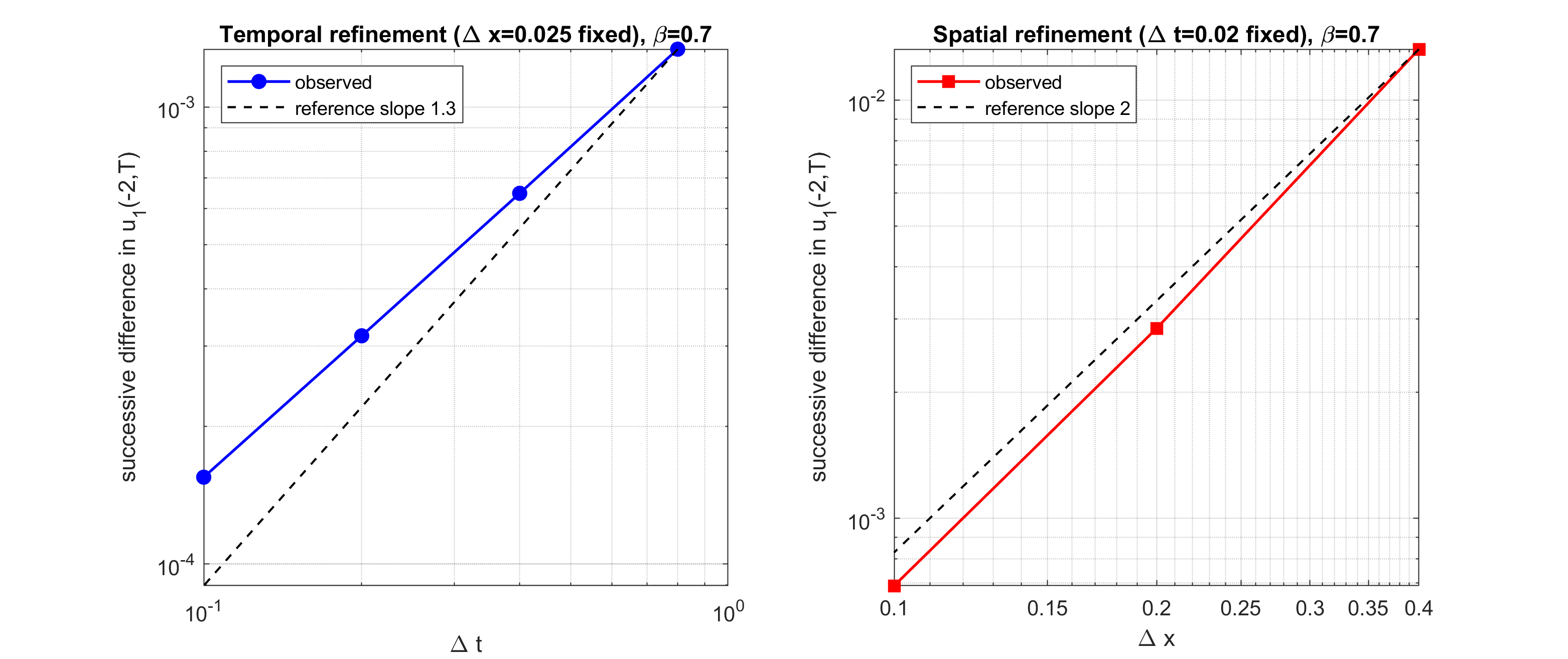}
\caption{Self-convergence of the L1/implicit scheme for $\beta=0.7$, $\kappa_1=0.169$, measured as the successive difference in $u_1(-2,T)$ under refinement. Left: temporal refinement ($\Delta x=0.025$ fixed), dashed line of reference slope $2-\beta$. Right: spatial refinement ($\Delta t=0.02$ fixed), dashed line of reference slope $2$.}
\label{fig:convergence}
\end{figure}

The convergence study above was carried out on the uncoupled reference line, to isolate the bulk $L^1$ scheme's own accuracy from any effect of the vertex discretization. 
We repeated it directly on the coupled star-graph system~\eqref{eq:fracheatgraph}--\eqref{eq:vertexconds}, 
with the sum rule satisfied ($\kappa_1=\kappa_2+\kappa_3=0.169$), probing $u_1$ at two locations: an interior point on bond~1 ($x=-2$, away from the vertex) and the vertex value itself ($x=0$).
Table~\ref{tab:starconv} reports the resulting rates. 
Away from the vertex, the coupled system converges at essentially the same rates as the uncoupled reference line (temporal $\approx1.03$--$1.05$, spatial $\approx2.04$--$2.22$), confirming that the vertex coupling does not degrade the bulk scheme's accuracy. At the vertex itself, however, the spatial rate drops to $\approx1.21$--$1.35$: 
this confirms, rather than merely asserts, that the one-sided finite difference used for the Kirchhoff flux~\eqref{eq:vertexconds} is locally the dominant source of spatial error near the junction, while the bulk of each bond retains full second-order accuracy.

\begin{table}[h]
\centering
\caption{Observed convergence rates on the coupled star-graph system, $\beta=0.7$, $\kappa_1=\kappa_2+\kappa_3=0.169$, probing $u_1$ away from the vertex ($x=-2$) and at the vertex ($x=0$).}
\label{tab:starconv}
\begin{tabular}{@{}lcc@{}}
\toprule
Refinement & rate, $x=-2$ & rate, $x=0$ (vertex) \\
\midrule
Temporal ($\Delta x=0.025$ fixed), expect $2-\beta=1.3$ & $1.03$--$1.05$ & --- \\
Spatial ($\Delta t=0.02$ fixed), bulk expects $2$ & $2.04$--$2.22$ & $1.21$--$1.35$ \\
\bottomrule
\end{tabular}
\end{table}

To confirm that the reduced uniform-mesh rate is indeed attributable to the $t=0$ initial layer rather than to some other deficiency of our implementation, we repeated the temporal refinement study on a graded mesh $t_k=(k/N)^r\,T$, $k=0,\ldots,N$, using the general (non-uniform) form of the L1 approximation, obtained by replacing the constant weights $\Delta t^{-\beta}b_j/\Gamma(2-\beta)$ with the exact integrals of $(t_{n+1}-\tau)^{-\beta}$ over each non-uniform subinterval. 
Stynes, O'Riordan and Gracia~\cite{StynesORiordanGracia2017} show that a grading exponent $r\ge(2-\beta)/\beta$ suffices to recover the full rate $2-\beta$; we used $r=(2-\beta)/\beta\approx1.86$ for $\beta=0.7$. 
Figure~\ref{fig:graded} compares the resulting successive-difference rates against the uniform-mesh case of Figure~\ref{fig:convergence}: the graded mesh gives an observed rate of about $1.28$--$1.29$, closely matching the theoretical rate $2-\beta=1.3$, against $1.03$--$1.06$ on the uniform mesh with the same number of time steps. 
This confirms both that our diagnosis of the reduced uniform-mesh rate is correct, and that the corrected implementation behaves exactly as the cited theory predicts once the initial-layer singularity is properly resolved.

\begin{figure}[h]
\centering
\includegraphics[width=0.75\linewidth]{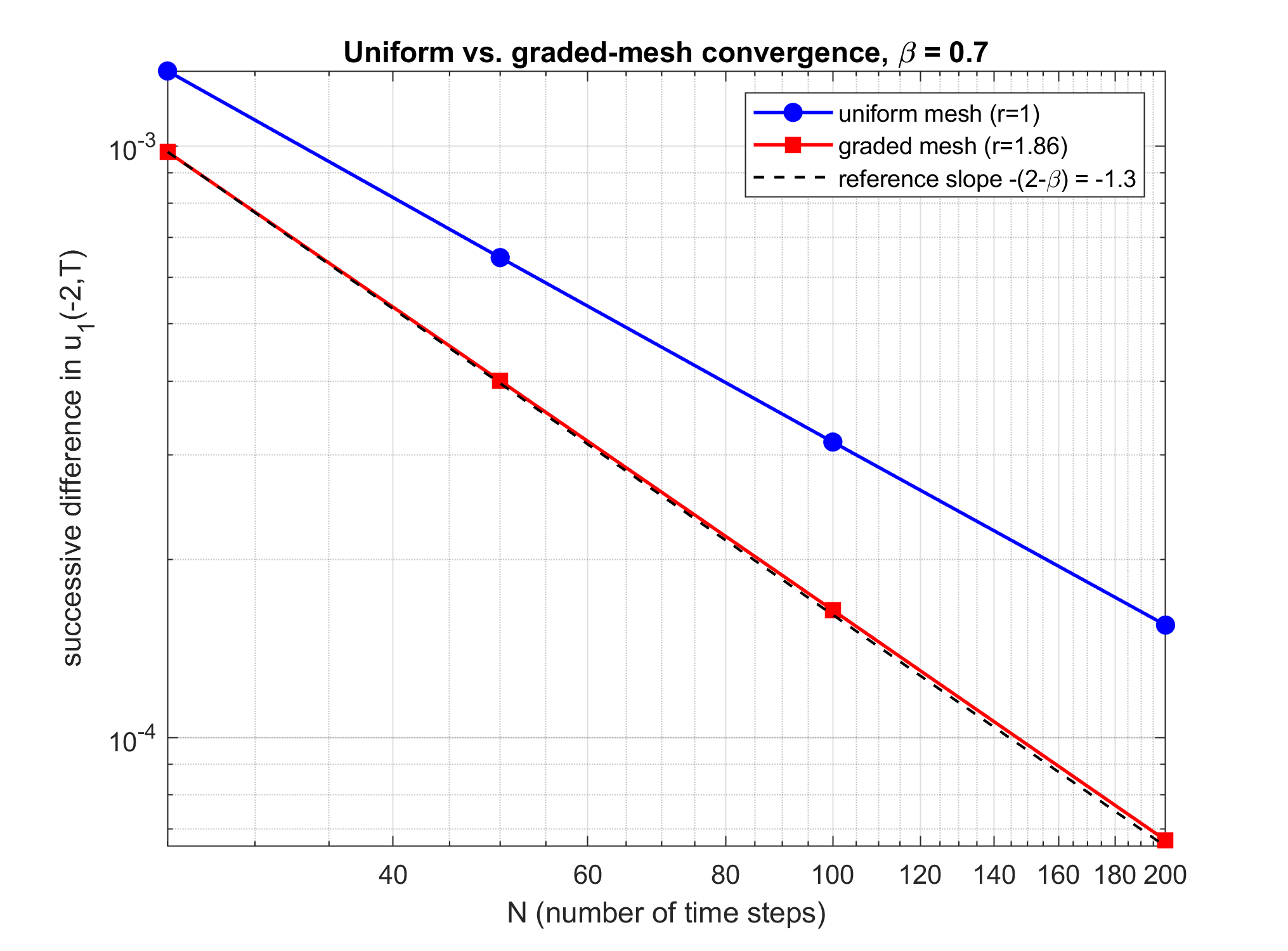}
\caption{Successive-difference convergence in $N$ (number of time steps) for $\beta=0.7$, $\kappa_1=0.169$: uniform mesh ($r=1$) vs.\ graded mesh ($r=(2-\beta)/\beta$), against a reference slope $2-\beta$.}
\label{fig:graded}
\end{figure}

Rather than discretizing the non-local operator~\eqref{eq:vertexTBC} itself  a task we return to in Section~\ref{sec:discussion}  we validate the sum rule directly at the level of the coupled partial differential equations (PDEs) system, mirroring the strategy of Figures~3--4 of~\cite{Matrasulov2026}: 
we compare the numerical solution $u_1(x,t)$ on bond~1 of the star graph against a reference solution $u_1^{\mathrm{ref}}(x,t)$, obtained by solving the same time-fractional heat equation~\eqref{eq:fracheat}, with diffusivity $\kappa_1$, on a single unbranched line (no vertex, no Kirchhoff coupling), truncated symmetrically and far enough that the truncation itself is not visible on the time horizon considered. 
Both solutions use the same L1/implicit scheme, the same $\Delta x,\Delta t$, and the same Gaussian initial datum on bond~1 (zero on the positive half of the reference line, matching the quiescent bonds $b_2,b_3$). 
If the sum rule makes the junction transparent, the two solutions should coincide on $x\le0$; any deviation quantifies backflow. 
We emphasize that this validates the sum rule's effect at the level of the coupled PDE system~\eqref{eq:fracheatgraph}--\eqref{eq:vertexconds} and its standard Kirchhoff condition, already established analytically in Proposition~\ref{prop:invariance}: it is not a direct numerical discretization of the non-local operator~\eqref{eq:vertexTBC} itself (which, as noted above, we do not discretize here), so it should not be read as an independent numerical confirmation of Theorem~\ref{thm:vertex}'s specific operator identity beyond what the analytical argument already establishes.
Here, we used $\kappa_2=0.089$, $\kappa_3=0.08$, a Gaussian pulse $u_1(x,0)=\mathrm{e}^{-(x+2)^2/0.2}$ on bond~1, $\Delta x=0.2$--$0.25$, $\Delta t=0.15$, and truncated domains at $|x|\le12$--$15$ (verified far enough not to affect the reported time horizon).

Figures~\ref{fig:evol_sat} and~\ref{fig:evol_broken} show the full evolution of the temperature profile on all three bonds, for $\beta=0.7$, in the reflectionless ($\kappa_1=\kappa_2+\kappa_3=0.169$) and non-reflectionless ($\kappa_1=0.35$) regimes, respectively (note the different vertical scales on bond~1 vs.\ bonds~2,3, since the transmitted pulse spreads over a much longer channel and its peak amplitude on $b_2,b_3$ is correspondingly smaller). 
In the reflectionless case, the incoming pulse splits smoothly between $b_2$ and $b_3$ with no visible distortion on bond~1 beyond ordinary sub-diffusive spreading.
When the sum rule is broken, bond~1 instead develops a visible secondary bulge near the vertex ($t=15,30,60$ in Figure~\ref{fig:evol_broken}) that is absent in Figure~\ref{fig:evol_sat}: 
this is the reflected component of the pulse, directly visible in the spatial profile rather than only in the aggregate diagnostic $\mathcal{E}(\kappa_1)$ introduced below.

\begin{figure}[h]
\centering
\includegraphics[width=\linewidth]{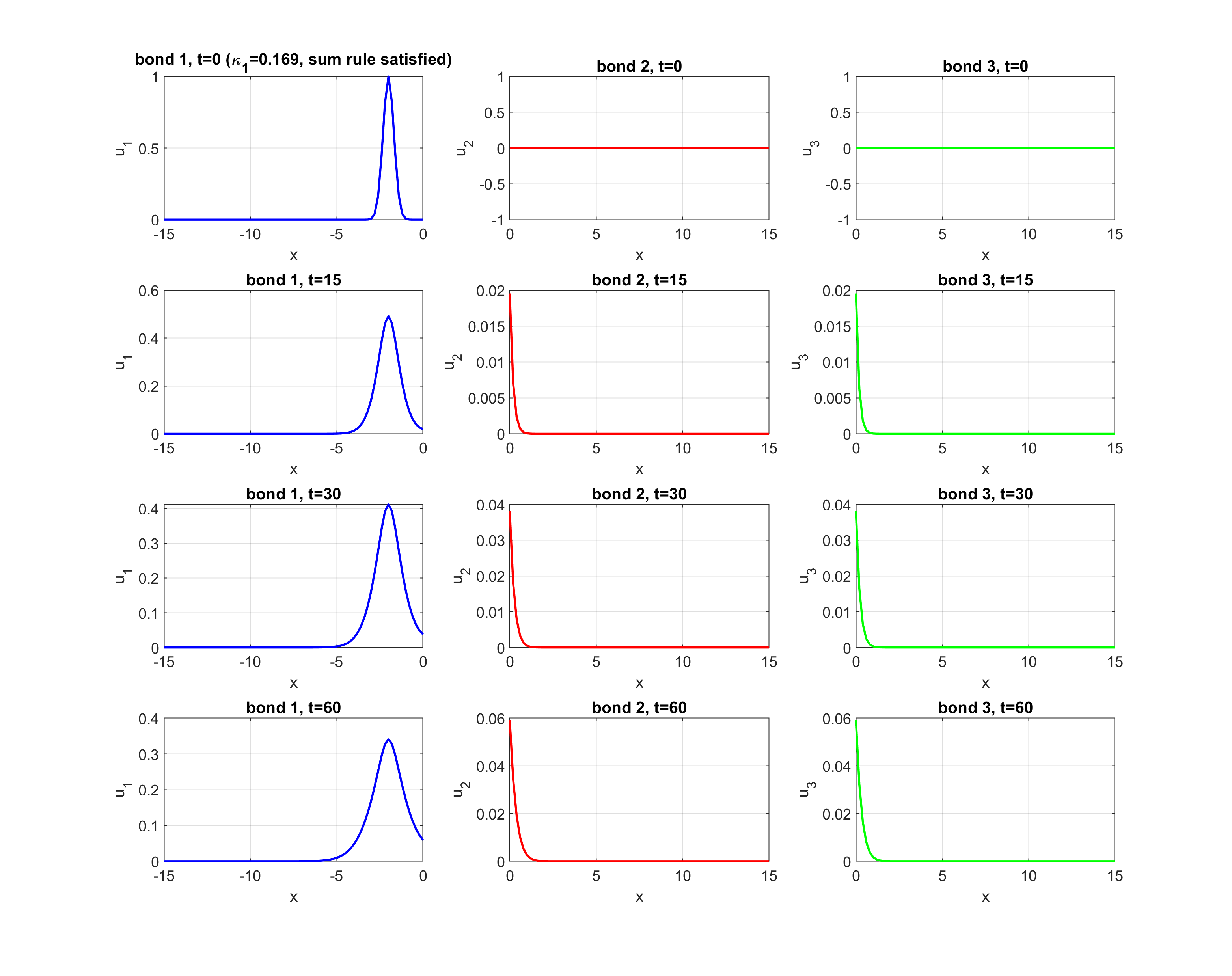}
\caption{Evolution of the temperature profile on all three bonds, $\beta=0.7$, $\kappa_1=\kappa_2+\kappa_3=0.169$ (sum rule satisfied). Columns: bonds 1, 2, 3; rows: $t=0,15,30,60$.}
\label{fig:evol_sat}
\end{figure}

\begin{figure}[h]
\centering
\includegraphics[width=\linewidth]{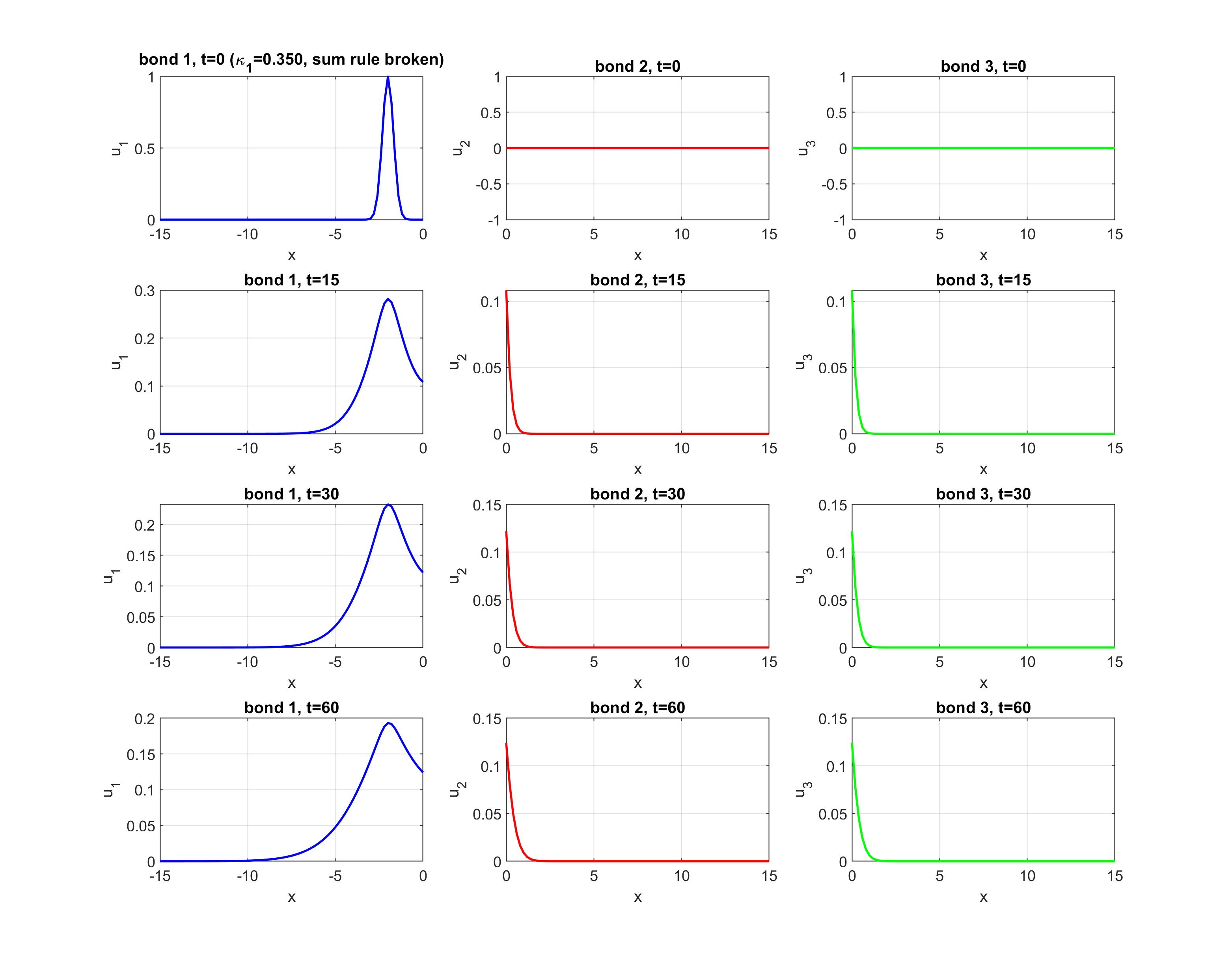}
\caption{Evolution of the temperature profile on all three bonds, $\beta=0.7$, $\kappa_1=0.35\neq\kappa_2+\kappa_3$ (sum rule broken). Same layout as Figure~\ref{fig:evol_sat}; note the secondary bulge developing on bond~1 near the vertex at $t=15,30,60$, absent under the sum rule.}
\label{fig:evol_broken}
\end{figure}

We measured the relative \textit{$L^2$ transparency error}
\begin{equation*}
    \mathcal{E}(\kappa_1) = \frac{\big\|u_1(\cdot,T)-u_1^{\mathrm{ref}}(\cdot,T)\big\|_{L^2(x\le0)}}{\big\|u_1^{\mathrm{ref}}(\cdot,T)\big\|_{L^2(x\le0)}}
\end{equation*}
at a fixed final time $T$, as a function of $\kappa_1$, for several values of $\beta$.
Figure~\ref{fig:snapshots} shows the temperature profile on bond~1 at several times for $\beta=0.7$ and $\kappa_1=\kappa_2+\kappa_3=0.169$: the star-graph solution and the unbranched reference are visually indistinguishable throughout the evolution.
\begin{figure}[h]
\centering
\includegraphics[width=\linewidth]{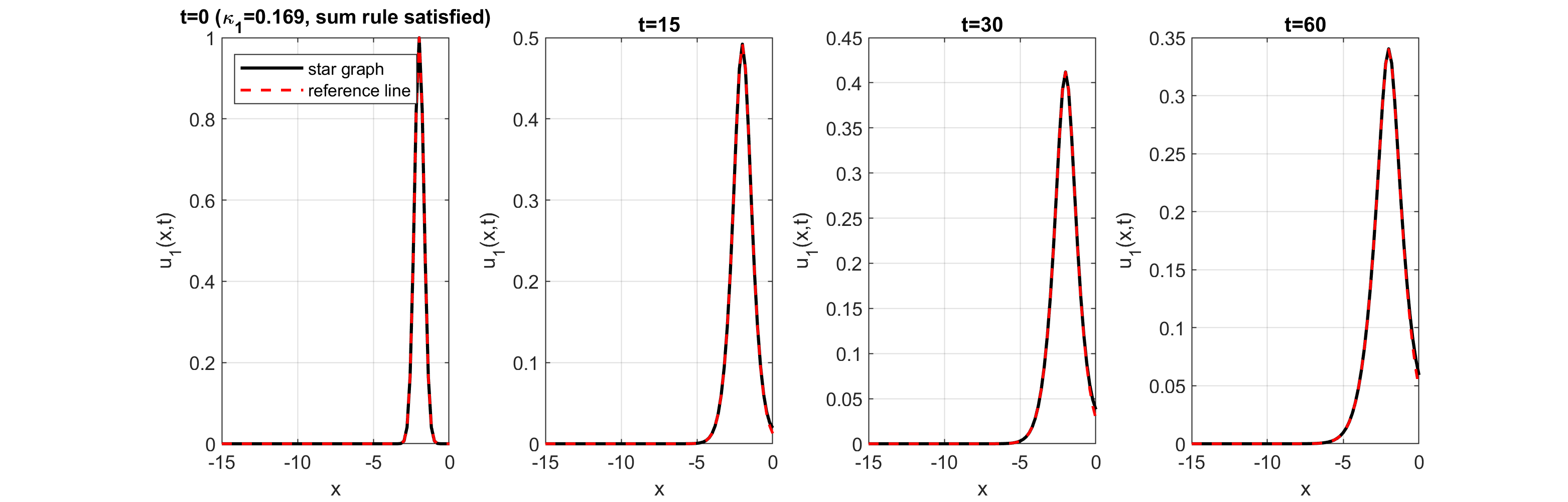}
\caption{Temperature profile on bond~1, star-graph simulation (black) vs.\ unbranched reference (red dashed), for $\beta=0.7$ and $\kappa_1=\kappa_2+\kappa_3=0.169$ (sum rule satisfied). $\kappa_2=0.089$, $\kappa_3=0.08$.}
\label{fig:snapshots}
\end{figure}
For contrast, Figure~\ref{fig:snapshots_broken} repeats this comparison with the sum rule broken ($\kappa_1=0.35$): a small but visible deviation develops between the star-graph solution and the reference line as the pulse reaches and passes through the vertex, consistent with the elevated $\mathcal{E}(\kappa_1)$ reported in Table~\ref{tab:results} for this parameter choice.

\begin{figure}[h]
\centering
\includegraphics[width=\linewidth]{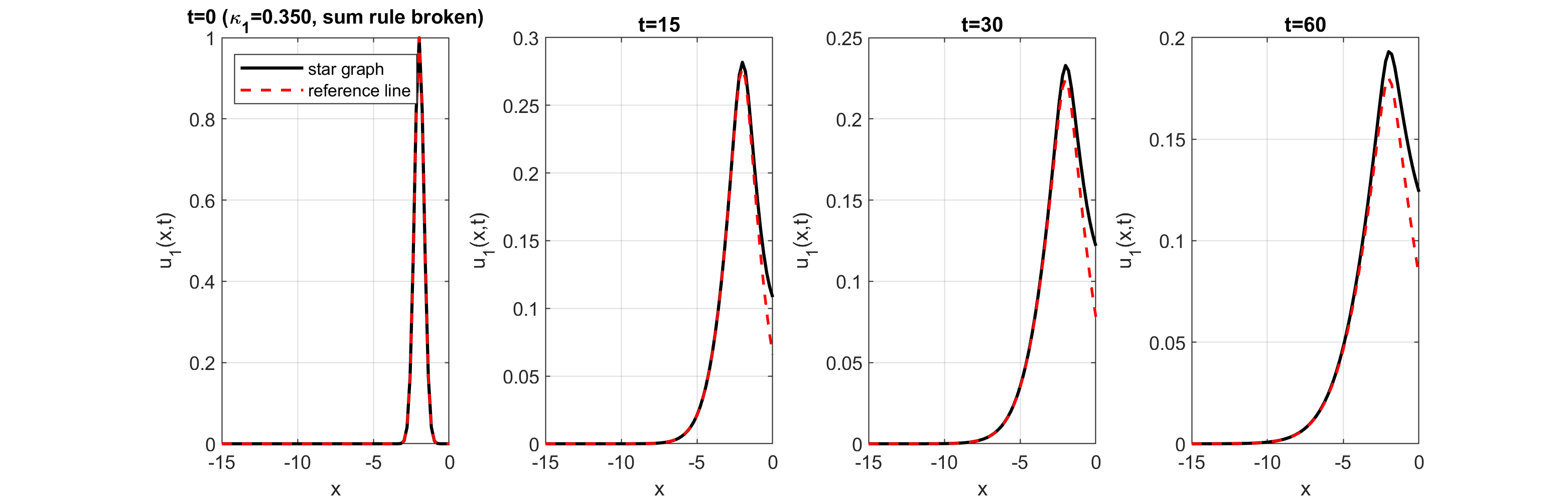}
\caption{Same comparison as Figure~\ref{fig:snapshots}, with the sum rule broken ($\kappa_1=0.35$, $\beta=0.7$). A small deviation between the star-graph solution and the reference line is visible near the peak at $t=15,30,60$.}
\label{fig:snapshots_broken}
\end{figure}

Figure~\ref{fig:error} shows the relative transparency error $\mathcal{E}(\kappa_1)$ as a function of $\kappa_1$, for $\beta=0.99,0.7,0.5$, with $\kappa_2,\kappa_3$ fixed as above. 
For every $\beta$ tested, $\mathcal{E}$ attains its minimum at, or immediately next to, 
$\kappa_1=\kappa_2+\kappa_3=0.169$ (vertical dotted line), confirming numerically that the sum rule remains the transparency-optimal choice of $\kappa_1$ independently of $\beta$. 
Table~\ref{tab:results} reports the corresponding numbers for $\kappa_1=0.169$ (sum rule) versus $\kappa_1=0.35$, across five values of $\beta$; the relative error is consistently three-to-five times smaller when the sum rule holds. 
The residual error at $\kappa_1=0.169$ is not exactly zero, which is expected: it is a discretization artifact of the L1/implicit scheme, and Table~\ref{tab:starconv} above confirms that the one-sided differencing at the vertex is indeed its dominant local source.

\begin{figure}[h]
\centering
\includegraphics[width=0.7\linewidth]{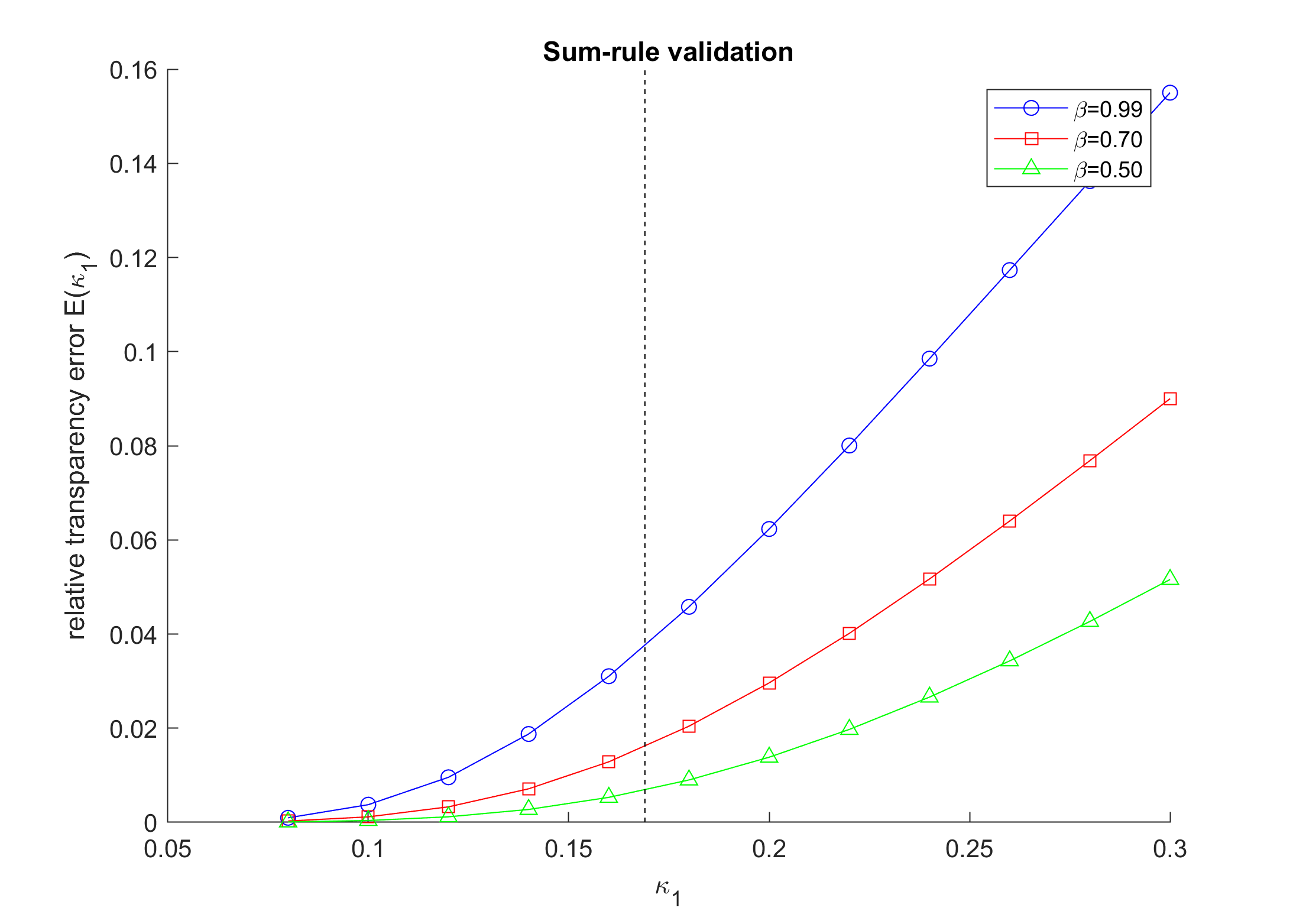}
\caption{Relative transparency error $\mathcal{E}(\kappa_1)$ vs.\ $\kappa_1$, for three values of $\beta$. The dotted vertical line marks $\kappa_1=\kappa_2+\kappa_3=0.169$.}
\label{fig:error}
\end{figure}

\begin{table}[h]
\centering
\caption{Relative transparency error $\mathcal{E}$ at the final simulated time, sum rule satisfied ($\kappa_1=0.169$) vs.\ broken ($\kappa_1=0.35$), for several $\beta$.}
\label{tab:results}
\begin{tabular}{@{}ccc@{}}
\toprule
$\beta$ & $\mathcal{E}$, sum rule satisfied & $\mathcal{E}$, sum rule broken \\
\midrule
0.99 & 3.74\% & 20.07\% \\
0.90 & 3.03\% & 17.65\% \\
0.70 & 1.60\% & 12.34\% \\
0.50 & 0.68\% & 7.55\% \\
0.30 & 0.23\% & 3.87\% \\
\bottomrule
\end{tabular}
\end{table}

Finally, Figure~\ref{fig:flux} shows the time evolution of the one-sided vertex fluxes $q_j(t) = -\kappa_j^2\,\partial_x u_j(x_j,t)|_{x_j=0}$, $j=1,2,3$, 
both when the sum rule holds and when it is broken. 
With our sign convention (each $q_j$ computed in bond $j$'s own local coordinate), 
the Kirchhoff condition~\eqref{eq:vertexconds} is exactly $q_1=q_2+q_3$ rather than $q_1+q_2+q_3=0$; 
this identity is confirmed numerically to machine precision ($\max_t|q_1-(q_2+q_3)|\sim10^{-17}$--$10^{-18}$) in both panels of Figure~\ref{fig:flux}, 
regardless of whether the sum rule holds.
This is consistent with the observation already made in~\cite{Matrasulov2026} for $\beta=1$: 
flux conservation at the vertex is a direct consequence of the Kirchhoff condition alone and holds irrespective of the sum rule, which instead governs how the incoming flux is apportioned between $q_2$ and $q_3$ and, more importantly, whether bond~1 itself sees any reflection (Figure~\ref{fig:snapshots}). 
The two panels of Figure~\ref{fig:flux} indeed look qualitatively similar in shape; 
the reflection caused by breaking the sum rule is visible in the temperature profile on bond~1 (Figure~\ref{fig:snapshots}) and in the transparency error $\mathcal{E}(\kappa_1)$ (Figure~\ref{fig:error}), not directly in the vertex flux balance itself.

\begin{figure}[h]
\centering
\includegraphics[width=\linewidth]{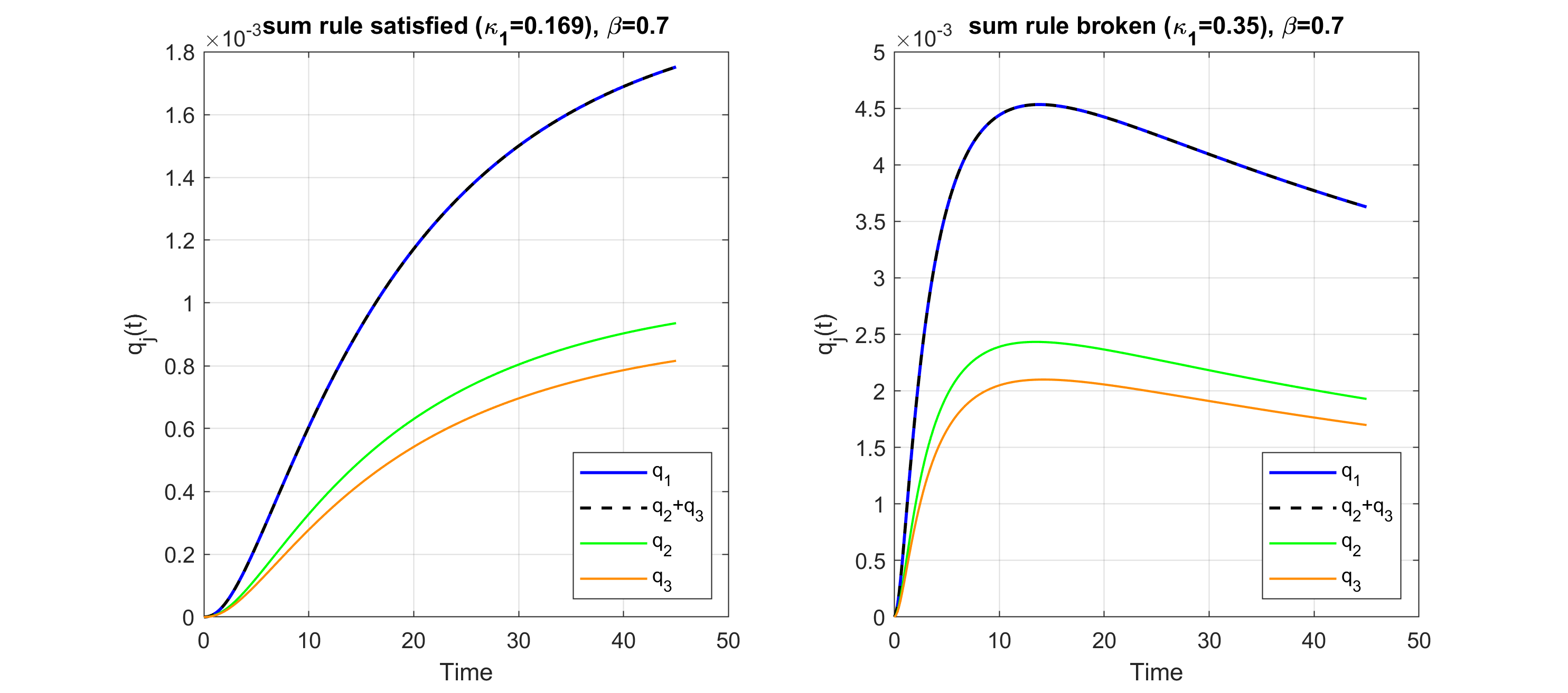}
\caption{Vertex heat fluxes $q_j(t)$, $\beta=0.7$, sum rule satisfied (left) vs.\ broken (right). The dashed curve $q_2+q_3$ overlays $q_1$ exactly in both panels, confirming Kirchhoff flux conservation independently of the sum rule.}
\label{fig:flux}
\end{figure}

\section{Summary and Discussion}\label{sec:discussion}
We have shown that thermal transparency at a branched junction survives the passage from classical to sub-diffusive transport.
The vertex operator seen by the incoming bond is an Abel-type convolution kernel of order $\beta/2$, and the diffusion-coefficient sum rule $\kappa_1=\kappa_2+\kappa_3$ that makes the junction transparent at $\beta=1$ continues to do so, unchanged, for every $\beta\in(0,1]$ : the fractional order enters both the branched and free-space operators through the same kernel and cancels out of the transparency condition. 
This was confirmed numerically with an implicit L1 scheme, both on bond~1 against an unbranched reference line and, directly, in the vertex heat flux.

Several extensions remain open. 
The sum-rule invariance generalizes to $n$-bond star graphs, but trees, loops, and graphs with multiple vertices are not treated here. 
We assumed a single fractional order shared by all bonds; bond-dependent orders $\beta_j$ break the common-kernel argument behind Proposition~\ref{prop:invariance} and are related to, but not resolved by, the edge-dependent-order model of~\cite{SobirovTuremuratova2024} and the space-time fractional metric-graph framework of~\cite{MehandirattaMehra2025}. 
Proposition~\ref{prop:wellposed} gives uniqueness for the coupled vertex problem; a matching existence theory, and a genuinely discrete (``discretize-first'') fractional vertex condition, which was suggested by 
Ehrhardt in the spirit of the 
discrete TBCs for classical parabolic systems developed in~\cite{ZisowskyEhrhardt2006} are left for future work.

\backmatter


\subsubsection*{Declarations}

\subparagraph{Author Contributions} All authors contributed equally to this work.

\subparagraph{Funding} The authors declare that there was no funding for carrying out this research.

\clearpage
\bibliography{refs}

\end{document}